\newtheorem{Theorem}[equation]{Theorem}
\newtheorem{Proposition}[equation]{Proposition}
\newtheorem{Corollary}[equation]{Corollary}
\newtheorem{Definition}[equation]{Definition}
\newtheorem{Remark}[equation]{Remark}
\newtheorem{Problem}[equation]{Problem}
\newtheorem{Conjecture}[equation]{Conjecture}
\newtheorem{Definition and Proposition}[equation]{Definition and Proposition}
\newtheorem{Definition and Theorem}[equation]{Definition and Theorem}
\newtheorem{Definition and Remark}[equation]{Definition and Remark}
\newtheorem{Definition and Lemma}[equation]{Definition and Lemma}
\newtheorem{Definition and Example}[equation]{Definition and Example}
\newtheorem{`Theorem'}[equation]{`Theorem'}
\newtheorem*{IntroTheorem}{Theorem}
\newcommand{\draft}[1]{} 
\newcommand{\vsp}{\vspace{2mm}}
\def\epsilon{\varepsilon}
\def\phi{\varphi}
\newcommand{\al}{\alpha}
\newcommand{\de}{\delta}
\newcommand{\lam}{\lambda}
\newcommand{\si}{\sigma}
\newcommand{\deti}{{\ti{\de}}}
\newcommand{\dehat}{{\hat{\de}}}
\newcommand{\fti}{{\ti{f}}}
\def\N{{\mathbb N}}
\def\R{{\mathbb R}}
\def\Z{{\mathbb Z}}
\newcommand{\mcD}{\mathcal D}
\newcommand{\mcO}{\mathcal O}
\newcommand{\mcP}{\mathcal P}
\newcommand{\mfM}{\mathfrak M}
\newcommand{\mfS}{\mathfrak S}
\newcommand{\msC}{\mathscr C}
\newcommand{\msF}{\mathscr F}
\newcommand{\msL}{\mathscr L}
\newcommand{\msP}{\mathscr P}
\newcommand{\ra}{\rightarrow}
\newcommand{\IFF}{\Leftrightarrow}
\newcommand{\ti}{\tilde}
\newcommand{\x}{\times}
\newcommand{\beq}{\begin{equation}}
\newcommand{\eeq}{\end{equation}}
\newcommand{\beqs}{\begin{equation*}}
\newcommand{\eeqs}{\end{equation*}}
\DeclareMathOperator{\dist}{dist}
\DeclareMathOperator{\graph}{graph}
\DeclareMathOperator{\Id}{Id}
\DeclareMathOperator{\spt}{spt}
\DeclareMathOperator{\vol}{vol}
\def\slashii#1{\setbox0=\hbox{$#1$}             
\dimen0=\wd0                                 
\setbox1=\hbox{\sl/} \dimen1=\wd1            
\ifdim\dimen0>\dimen1                        
\rlap{\hbox to \dimen0{\hfil\sl/\hfil}}   
#1                                        
\else                                        
\rlap{\hbox to \dimen1{\hfil$#1$\hfil}}   
\hbox{\sl/}                               
\fi}                                         %
\def\slashiii#1{\setbox0=\hbox{$#1$}#1\hskip-\wd0\hbox to\wd0{\hss\sl/\/\hss}}
\DeclarePairedDelimiter\abs{\lvert}{\rvert}
\newcommand{\refsectionoptimaltransport}{Section \ref{sectionoptimaltransport}}
\newcommand{\refnomap}{Remark \ref{nomap}}
\newcommand{\refidonspt}{Proposition \ref{idonspt}}
\newcommand{\refccyclic}{Definition \ref{ccyclic}}
\newcommand{\refccyclicoptimal}{Theorem \ref{ccyclicoptimal}}
\newcommand{\refeqivstatements}{Proposition \ref{equivstatements}}
\newcommand{\refndimpartition}{Definition \ref{ndimpartition}}
\newcommand{\refsectionapplications}{Section \ref{sectionapplications}}
\newcommand{\refsymmetricpartition}{Definition \ref{symmetricpartition}}
\newcommand{\refreflectionoptimal}{Theorem \ref{reflectionoptimal}}
\newcommand{\refselfsymmetricnull}{Corollary \ref{selfsymmetricnull}}
\newcommand{\refsubsectioneuler}{Subsection \ref{subsectioneuler}}
\newcommand{\refallodd}{Proposition \ref{allodd}}
\newcommand{\refalldistinct}{Proposition \ref{alldistinct}}
\newcommand{\refeulercost}{Theorem \ref{eulercost}}
\begin{document}

\title{Optimal transport and integer partitions}

\author{Sonja Hohloch \\ Universiteit Antwerpen \\ {\small email: sonja.hohloch@uantwerpen.be} \\ {\small phone: +32-3-265-3231} \\ {\small fax: +32-3-265-3777}}

\date{\today}

\maketitle

\begin{abstract}
\noindent
We link the theory of optimal transportation to the theory of integer partitions.
Let $\msP(n)$ denote the set of integer partitions of $n \in \N$ and write partitions $\pi \in \msP(n)$ as $(n_1, \dots, n_{k(\pi)})$.
Using terminology from optimal transport, we characterize certain classes of partitions like symmetric partitions and those in Euler's identity
\beqs
\abs{ \{ \pi \in \msP(n) \mid \mbox{all } n_i \mbox{ distinct} \} } = \abs{ \{ \pi \in \msP(n) \mid \mbox{all } n_i \mbox{ odd} \}}.
\eeqs
Then we sketch how optimal transport might help to understand higher dimensional partitions.
\end{abstract}

\section{Introduction}

In this paper, we apply the theory of optimal transport to the classical topic of integer partitions. Both fields have been studied independently since the 18th century, but have -- up to our knowledge -- not yet been linked.


\subsection{Optimal transport and integer partitions}

The theory of optimal transport goes back to a problem posed by Monge \cite{monge} in 1781 and reformulated by Kantorovich \cite{kantorovich 42, kantorovich 48} in 1942. Monge asked the following question: Let $\mu^-$ and $\mu^+$ denote two heaps of sand with $\vol(\mu^-)= \vol(\mu^+)$ as in Figure \ref{transport}. Imagine both heaps as consisting of grains. Is there a map $\phi: \mu^- \to \mu^+$ with $\phi(\mu^-)=\mu^+$ minimizing $\sum_{x \in \mu^-} dist_{eucl}(x, \phi(x))$, i.e.\ can we `transport' $\mu^-$ into $\mu^+$ such that the sum of the transported distances is minimal? And what if we assume the heaps to consist of `continuous' matter instead of grains? Or what if we replace the Euclidean distance by a more general `cost function' $c:\mu^- \x \mu^+ \to \R$?

\begin{figure}[h]
\begin{center} 
\input{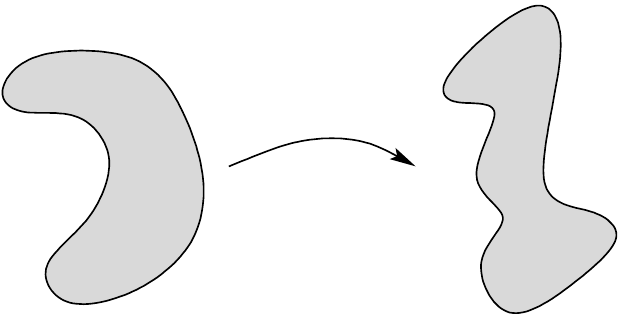_t}
\caption{Transporting $\mu^-$ into $\mu^+$.}
\label{transport}
\end{center}
\end{figure}

Reformulated in modern language, we consider two finite measures $\mu^-$ and $\mu^+$ in $\R^n$ with $\mu^-(\R^n) = \mu^+(\R^n)$ and we are looking for a map $\phi : \R^n \to \R^n$ with image measure $\phi(\mu^-)=\mu^+$ minimizing $\int_{\R^n} c(x, \phi(x)) d\mu^-(x)$ for a given cost function $c: \R^n \x \R^n \to \R$. The infimum (hopefully minimum) is denoted by $C(\mu^-, \mu^+)$. For more details, in particular Kantorovich's formulation, and references we refer the reader to \refsectionoptimaltransport.

\vsp

Integer partitions seem to have fascinated humans already since the stone age (cf.\ Andrews $\&$ Eriksson \cite{andrews-eriksson}). Partitions describe the way to decompose an integer into (a sum of) integers: for instance, if we take the number $4$, there are the five partitions $4=3+1=2+2= 2+1+1 = 1+ 1+ 1+1$. One considers partitions up to reordering, i.e.\ $3+1$ and $1+3$ are the same partition. Denote by $\msP(n)$ the set of partitions of the (positive) integer $n \in \N$ and by $p(n)$ its cardinality. Partitions $\pi \in \msP(n)$ are usually written as ordered tuples $\pi=(n_1, \dots, n_{k(\pi)})$ where we often abbreviate $k(\pi)=k$, i.e. $\pi =(n_1, \dots, n_k) $, if no confusion is possible. For instance, we have
$\msP(4)=\{ (4), (3,1), (2,2), (2,1,1), (1,1,1,1) \}$.
There are several ways to display partitions visually, like Ferrer graphs and Young tableaux, see Figure \ref{ferrer}.

\begin{figure}[h]
\begin{center} 
\input{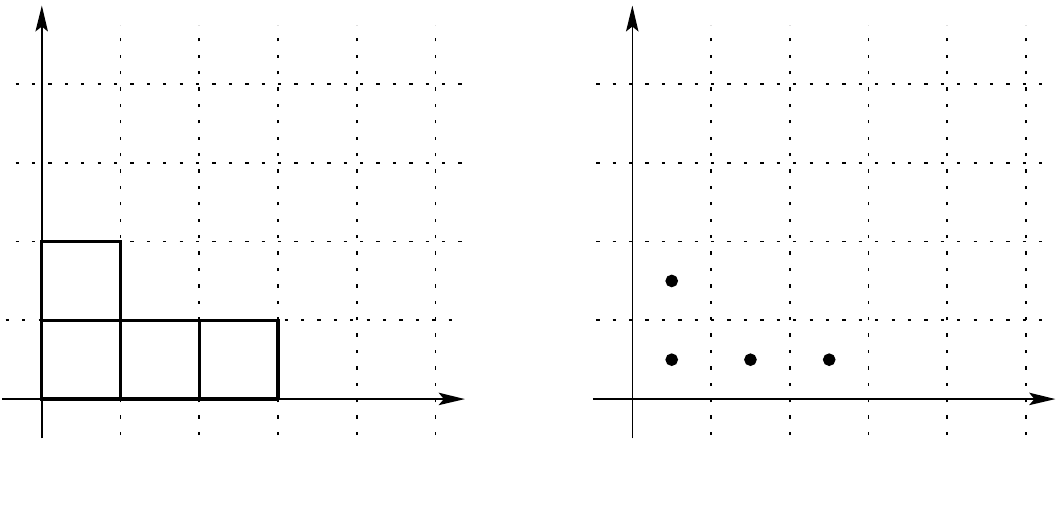_t}
\caption{(a) Young tableau and (b) Ferrer graph of $(2,1,1) \in \msP(4)$.}
\label{ferrer}
\end{center}
\end{figure}

\vsp

Using the so-called `generating function method', Euler proved that the elements of the sequence $(p(n))_{n \in \N}$ are the coefficients of the expansion 
\beqs
\prod_{i \geq 1}\frac{1}{1-x^i}=\sum_{n \geq 0} p(n)x^n
\eeqs
which is called the {\em generating function} of $p(n)$. The generating function method is an important tool, see Andrews \cite{andrews} and Andrews $\&$ Eriksson \cite{andrews-eriksson} for more details.	
Denote by $\msP(n \mid A) \subseteq \msP(n)$ the subset of partitions with property $A$ and by $p(n \mid A)$ its cardinality. Among many other `partition identities', Euler proved by means of generating functions
\begin{equation}
\label{euleridentity}
p(n \mid \mbox{all } n_i \mbox{ odd}) = p(n \mid \mbox{all } n_i \mbox{ mutually distinct}).
\end{equation}
Integer partitions as described above are called `one dimensional' integer partitions: Intuitively one can see a Ferrer graph or a Young tableau as `graph' over the one dimensional real axis. Consequently, `two dimensional' partitions of an integer $n$ can be displayed as Ferrer graphs or Young tableaux over the two dimensional plane $\R^2$. We denote by $\msP_2(n)$ the set of two dimensional partitions of $n \in \N$ and by $p_2(n)$ its cardinality. Figure \ref{2dimyoung} (a) displays the Young tableau of
$\left[\begin{smallmatrix}
1 & \\ 2 & 1                                                                                                                                                                                                                                                                                                                                                                                                                                                                                                                                                                                                                                                                                                                                                                                                                                \end{smallmatrix} \right]
\in \msP_2(4)
$
and Figure \ref{2dimyoung} (b) shows the Young tableau of
$\left[ \begin{smallmatrix} 1 \\ 1 \end{smallmatrix} \right] \in \msP_2(2)$.

\begin{figure}[h]
\begin{center} 
\input{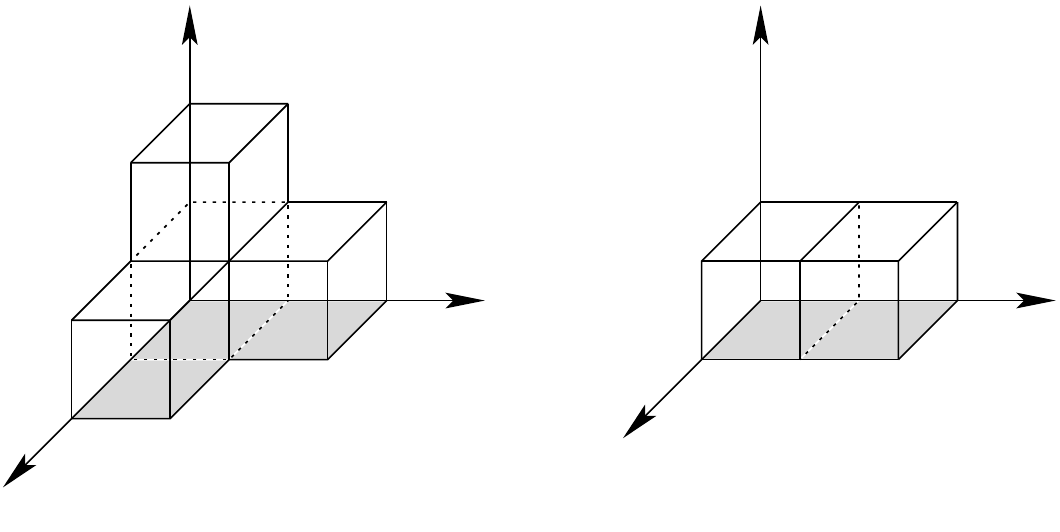_t}
\caption{Two dimensional Young tableaux.}
\label{2dimyoung}
\end{center}
\end{figure}

Two dimensional partitions have been studied by MacMahon who found 
\beqs
\prod_{k \geq 1} \frac{1}{(1-x^k)^k} = \sum_{n\geq 0} p_2(n) x^n
\eeqs
as generating function for the two dimensional partitions (cf.\ Andrews \cite{andrews} and Andrews $\&$ Eriksson \cite{andrews-eriksson}).

\vsp

Analogously one can define the set of $m$-dimensional partitions $\msP_m(n)$ with $n \in \N$.
A look at the literature of the last 250 years shows that there has been lots of research on one dimensional partitions, some research on two dimensional partitions (also called `plane partitions'), but not much on $m$-dimensional partitions for $m \geq 3$. 
This might be due to the fact that higher dimensional partitions are difficult to display such that it is hard to obtain a good intuition -- many one dimensional partition identities have been found by manipulating the associated Ferrer graphs in a tricky way.


\subsection{Main results}

We hope to have found a way to overcome the `dimension problem' or at least make higher dimensional partitions easier to approach.
Let us first line out our ideas for one dimensional partitions. 

\vsp

We want to link optimal mass transportation to integer partitions. How do we do this? Well, let us have a look at the Ferrer board or Young tableau of a partition. Instead of thinking of a Ferrer graph as marked points in the plane, interpret it as the sum of point measures. If $\de_{(x,y)}$ denotes the point measure with mass one at $(x,y) \in \R^2$ we can see a partition $\pi=(n_1, \dots, n_k) \in \msP(n)$ as $\de_\pi:= \sum_{i=1}^k \sum_{\al=1}^{n_i} \de_{( i, \al)}$. Analogously one can proceed with the Lebesgue measure restricted to squares and Young tableaux. We have $\de_\pi(\R^2)=n$ for all $\pi \in \msP(n)$. Note that the support of each $\de_\pi$ for $\pi \in \msP(n)$ consists of exactly $n$ points. Thus, given two partitions $\pi^-$, $\pi^+ \in \msP(n)$, we can set 
\beqs
\mu^-:= \de_{\pi^-} \quad \mbox{and} \quad \mu^+:=\de_{\pi^+}
\eeqs
and look for an `optimal' map $\phi$ `transforming' $\pi^-$ into $\pi^+$ in an `optimal way' w.r.t. a given cost function $c$, i.e.\ we look for $\phi$ with $\phi(\de_{\pi^-})=\de_{\pi^+}$ and 
\beqs
C(\de_{\pi^-}, \de_{\pi^+})=  \int_{\spt(\de_{\pi^-})}c((x,y), \phi(x,y)) d \de_{\pi^-}(x,y)
\eeqs
minimal. Since the support of the involved measures is finite there is always a map realizing the minimum.

\vsp

There are several subsets of partitions which can nicely be characterized by optimal transport. For example, symmetric partitions and self-symmetric partitions (see \refsymmetricpartition\ and Figure \ref{symmetric}) are easy to describe.

\begin{IntroTheorem}
Let $\pi$ be a partition and $sym(\pi)$ its symmetric partition and the Euclidean distance the cost function. Then the function which is the identity on $\spt(\de_\pi) \cap \spt(\de_{sym(\pi)})$ and the reflection on the $x=y$ axis is optimal for $\de_\pi$ and $\de_{sym(\pi)}$. Moreover, a partition is selfsymmetric if and only if $\spt(\de_\pi) = \spt(\de_{sym(\pi)})$, i.e.\ the identity function is optimal.
\end{IntroTheorem}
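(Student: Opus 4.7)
The plan is to reduce optimality to \refccyclicoptimal, which characterizes optima by $c$-cyclic monotonicity of the transport graph. First I would verify that the proposed $\phi$ is a valid transport map: $sym(\pi)$ is constructed so that $\spt(\de_{sym(\pi)}) = R(\spt(\de_\pi))$, where $R(x,y) := (y,x)$ is the reflection across the diagonal $\{x=y\}$. Since $R$ is an involution, the intersection $A := \spt(\de_\pi) \cap \spt(\de_{sym(\pi)}) = \spt(\de_\pi) \cap R(\spt(\de_\pi))$ satisfies $R(A) = A$. Writing $B := \spt(\de_\pi) \setminus A$, this $R$-invariance forces $R$ to restrict to a bijection $B \to \spt(\de_{sym(\pi)}) \setminus A$. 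Consequently $\phi$, identity on $A$ and $R$ on $B$, is a bijection $\spt(\de_\pi) \to \spt(\de_{sym(\pi)})$ with $\phi_\ast \de_\pi = \de_{sym(\pi)}$.

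For the optimality statement, by \refccyclicoptimal\ it suffices to show that
\beqs
\graph(\phi) = \{(p,p) : p \in A\} \cup \{(p, R(p)) : p \in B\}
\eeqs
is $c$-cyclically monotone for the Euclidean cost $c(x,y) = |x-y|$. For any finite family $p_1, \dots, p_m \in \spt(\de_\pi)$ and permutation $\sigma$, the indices with $p_i \in A$ contribute $0$ to the left-hand side of the defining inequality, so everything reduces to an estimate involving only points of $B$. The key pairwise lemma I would establish is: for $p, q \in B$ lying strictly on the same side of the diagonal,
\beqs
|p - R(p)| + |q - R(q)| \leq |p - R(q)| + |q - R(p)| = 2|p - R(q)|,
\eeqs
which, after writing $p = (a,b)$ and $q = (c,d)$ and squaring both sides, reduces to a completed-square identity of the form $(u+v)^2 \geq 0$ with $u, v$ linear in the coordinates; the equality $|R(p)-R(q)|=|p-q|$ is also used crucially.

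The self-symmetric characterization is then almost immediate: $\pi$ is self-symmetric if and only if $\pi = sym(\pi)$, if and only if $\spt(\de_\pi) = \spt(\de_{sym(\pi)})$, if and only if $B = \emptyset$. In that case $\phi = \Id$ has vanishing cost and is trivially optimal, and conversely the identity can serve as a transport map only when the two supports coincide. The main obstacle I expect is the configuration where $B$ contains points both above and below the diagonal: there the clean pairwise triangle-inequality argument does not directly combine the two sides, and one must either use the staircase constraint $n_1 \geq n_2 \geq \cdots$ of the Ferrer graph to rule out or control such mixed configurations, or handle the general $\sigma$ via a more global cycle decomposition that pairs above-diagonal and below-diagonal elements of $B$ through $R$. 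This is where I expect the bulk of the technical work to go.
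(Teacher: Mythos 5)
Your verification that the proposed map is a genuine transport map, your reduction to $c$-cyclic monotonicity via \refccyclicoptimal\ together with \refidonspt, and your pairwise inequality for two points of $B:=\spt(\de_\pi)\setminus\spt(\de_{sym(\pi)})$ lying on the \emph{same} side of the diagonal are all correct, and up to that point you follow essentially the paper's own route (the paper settles the monotonicity in one line by asserting that the segments between $z$ and $T(z)$ never cross). However, the ``main obstacle'' you flag at the end is not a technical loose end that the staircase structure rules out: it is a genuine obstruction, and the gap cannot be closed. Take $\pi=(4,2,2)$, so $sym(\pi)=(3,3,1,1)$. Then $B=\{(0.5,3.5),\,(2.5,1.5)\}$ contains one point above and one point below the line $x=y$, and with $p=(0.5,3.5)$, $q=(2.5,1.5)$, $T(p)=(3.5,0.5)$, $T(q)=(1.5,2.5)$ one computes
\beqs
|p-T(p)|+|q-T(q)| \;=\; 3\sqrt{2}+\sqrt{2} \;=\; 4\sqrt{2} \;>\; 2\sqrt{2} \;=\; |p-T(q)|+|q-T(p)|,
\eeqs
so the two-point swap is strictly cheaper, $\graph(f)$ is not $c$-cyclically monotone, and by \refccyclicoptimal\ the map is not optimal. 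Geometrically, every segment $[z,T(z)]$ lies on a line of slope $-1$; when $B$ straddles the diagonal two such segments can lie on the \emph{same} line and overlap, which is precisely the crossing that both your same-side lemma and the paper's figure-based argument implicitly exclude.

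Consequently your proposal, as you yourself suspected, proves the first assertion only when $B$ lies entirely on one side of the diagonal; in the mixed case the required inequality reverses, so no cycle decomposition or appeal to the monotonicity $n_1\ge n_2\ge\cdots$ can rescue the argument --- the counterexample above satisfies all the hypotheses. (The paper's own proof of \refreflectionoptimal\ has the same defect.) The part of your argument that does go through completely is the ``moreover'' clause: $\pi=sym(\pi)$ iff $\spt(\de_\pi)=\spt(\de_{sym(\pi)})$ iff $B=\emptyset$, in which case the identity is a zero-cost (hence optimal) transport map by \refidonspt, and conversely zero cost forces the supports to coincide since the cost is a metric. That equivalence, and your same-side pairwise lemma, are correct and worth keeping; the general optimality claim for the reflection map should be restricted or reformulated.
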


This result is proven in \refreflectionoptimal\ and \refselfsymmetricnull. Now let us have a look at Euler's identity \eqref{euleridentity}.
The left hand side as well as the right hand side can be nicely described by optimal transport. 
To $\pi \in \msP_1(n)$, we associate another two types of measures. There is $\dehat_\pi$ and, for a given permutation $\si$, the measure $\de_\pi^\si$ (for details, see \refsubsectioneuler).

\begin{IntroTheorem}
Let the cost function be a metric and $S$ the reflection on the $x$-axis. Then
\begin{enumerate}[1)]
\item
$\pi \in \msP_1(n \mid all\ n_i\ odd)\quad  \Leftrightarrow \quad  S(\dehat_\pi)=\dehat_\pi \quad \Leftrightarrow \quad C(\dehat_\pi, S(\dehat_\pi))=0.$
\item
$\pi \in \msP_1(n \mid all\ n_i\ distinct)\ \Leftrightarrow\ \forall \si \neq \Id: \de_\pi \neq \de_\pi^\si \ \Leftrightarrow\ \forall \si \neq \Id: C(\de_\pi, \de_\pi^\si)\neq 0$.
\end{enumerate}
\end{IntroTheorem}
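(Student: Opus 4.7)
The plan is to combine two ingredients: a general ``metric lemma'' identifying vanishing optimal transport cost with equality of measures, together with a combinatorial unpacking of the definitions of $\dehat_\pi$ and $\de_\pi^\si$. First I would establish the lemma: if $c$ is a metric and $\mu,\nu$ are finitely supported measures of equal mass, then $C(\mu,\nu)=0$ if and only if $\mu=\nu$. Since the supports are finite, the infimum is attained by some transport map $\phi$ with $\phi_\ast\mu=\nu$; vanishing of $\int c(x,\phi(x))\,d\mu$ together with $c$ being a metric forces $\phi(x)=x$ on $\spt(\mu)$, whence $\nu=\mu$, and the converse is immediate by taking $\phi=\Id$. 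This lemma produces the rightmost equivalence in each of (1) and (2) at once, and reduces everything to the middle equivalences.

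For part (1), I would unpack $\dehat_\pi$: column $i$ carries $n_i$ unit Dirac masses stacked on the vertical line $\{x=i\}$, centered as closely as possible about $y=0$ on the integer lattice. The identity $S(\dehat_\pi)=\dehat_\pi$ is then equivalent to the column-wise condition that each $n_i$-element integer set is symmetric about $0$, and a block of $n_i$ consecutive integers straddling the origin is reflection-symmetric about $0$ precisely when $n_i$ is odd. Hence $S(\dehat_\pi)=\dehat_\pi$ is equivalent to every $n_i$ being odd, which is the middle equivalence of (1).

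For part (2), I would unpack $\de_\pi^\si$ as the measure obtained from $\de_\pi$ by permuting the column-heights of $\pi$ via $\si$. Then $\de_\pi=\de_\pi^\si$ holds if and only if $\si$ fixes the ordered tuple $(n_1,\dots,n_k)$, i.e.\ only swaps indices carrying equal values. The condition $\de_\pi\neq\de_\pi^\si$ for every $\si\neq\Id$ is therefore equivalent to the stabilizer of $(n_1,\dots,n_k)$ in the symmetric group being trivial, which in turn is equivalent to all $n_i$ being pairwise distinct. The main obstacle I anticipate is not a deep argument but careful bookkeeping of the conventions for $\dehat_\pi$ and $\de_\pi^\si$; in particular, the centering convention for even-height columns in (1) must genuinely rule out $S$-invariance, so one has to pin down the definitions precisely before the geometric reasoning flows cleanly.
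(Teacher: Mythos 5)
Your proposal is correct and follows essentially the same route as the paper: the paper's Propositions \ref{allodd} and \ref{alldistinct} likewise reduce the cost-zero equivalences to the fact (via Proposition \ref{idonspt}) that for a metric-like cost the transport cost vanishes exactly when the supports coincide, and then settle the middle equivalences by the same column-wise unpacking of $\dehat_\pi$ and $\de_\pi^\si$. Your standalone ``metric lemma'' and your direct stabilizer argument for part (2) are just slightly more explicit packagings of what the paper does in one line.
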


This is proven in \refallodd\ and \refalldistinct.
Andrews $\&$ Eriksson \cite{andrews-eriksson} give an explicit algorithm which turns a partition with distinct $n_i$ into a partition with only odd $n_i$, thus proving Euler's identity \eqref{euleridentity}. 
The essential part of the algorithm is expressed by a certain map $\phi$ from $\msP_1(n \mid all\ n_i\ distinct)=:\mcD$ to the slightly generalized space $\msP_1^{perm}(n \mid all\ n_i\  odd)=:\mcO$ whose exact definition is stated after \refalldistinct. In \refeulercost, we show that

\begin{IntroTheorem}
There is a cost function $\msC: \mcD \x \mcO \to \R_+$ for which $\phi$ is optimal.
\end{IntroTheorem}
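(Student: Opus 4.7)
The plan is to regard the Andrews--Eriksson construction as a Monge transport between the counting measures $\mu^- := \sum_{\pi \in \mcD} \de_\pi$ and $\mu^+ := \sum_{\pi' \in \mcO} \de_{\pi'}$ on the finite sets $\mcD$ and $\mcO$. Since $\phi : \mcD \to \mcO$ is a bijection (this is precisely the content of the $\msP_1^{perm}$-version of Euler's identity encoded by the algorithm), it is an admissible Monge map, so one only needs to exhibit a cost $\msC$ making it optimal among all bijections $\psi : \mcD \to \mcO$.

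First I would observe that, with $\mu^\pm$ as above, a Monge transport reduces to a bijection $\psi : \mcD \to \mcO$ whose total cost is the finite sum $\sum_{\pi \in \mcD} \msC(\pi, \psi(\pi))$. The simplest choice that already suffices for the stated existence claim is
\beqs
\msC(\pi, \pi') := \begin{cases} 0, & \pi' = \phi(\pi), \\ 1, & \mbox{otherwise.} \end{cases}
\eeqs
Since $\msC$ takes values in $\R_+$, every admissible $\psi$ has nonnegative cost, while $\psi = \phi$ attains the value $0$; hence $\phi$ is optimal and in fact the unique optimum.

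For a cost with more intrinsic content I would build $\msC$ from the algorithm itself: Andrews--Eriksson turns a part $n_i = 2^a m$ (with $m$ odd) into $2^a$ copies of $m$ via an explicit sequence of elementary doubling/halving moves, so setting $\msC(\pi,\pi')$ equal to the minimal number of such moves carrying $\pi$ to $\pi'$ (with a sufficiently large penalty when no such sequence exists) would make $\phi$ a literal shortest-path transport. Optimality would then follow from the standard $\msC$-cyclic monotonicity criterion (cf.\ \refccyclicoptimal) once one verifies that the graph of $\phi$ is $\msC$-cyclically monotone.

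The main obstacle in this refined version is exactly the cyclic-monotonicity check: one must rule out that some cycle $\pi_1 \mapsto \phi(\pi_2) \mapsto \dots \mapsto \pi_k \mapsto \phi(\pi_1)$ strictly beats the diagonal matching, which amounts to a minimality statement about the Andrews--Eriksson procedure itself. For the theorem as stated, however, the trivial construction above bypasses this difficulty entirely, so the proof reduces to confirming that $\phi$ is indeed a bijection $\mcD \to \mcO$; this has been prepared in \refallodd\ and \refalldistinct\ together with the definition of $\msP_1^{perm}$ preceding \refeulercost.
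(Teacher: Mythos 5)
Your trivial construction---$\msC(\pi,\pi'):=0$ if $\pi'=\phi(\pi)$ and $1$ otherwise---does prove the theorem as literally stated, and it shares the logical skeleton of the paper's own proof: the paper likewise arranges $\msC\geq 0$ with $\msC(\pi,\phi(\pi))=0$, so that optimality of $\phi$ is immediate from nonnegativity. The difference is that the paper does not take the indicator cost; it builds $\msC$ out of the underlying metric-like cost $c$ on $\R^2$, comparing column by column the measure $\de_{\phi(n_i)}$ with the (optimally placed) measure $\de_{\psi(n_i)}$ and summing the resulting transport distances. That yields a cost with intrinsic content---it quantifies how far $\psi$ deviates from the Andrews--Eriksson rearrangement in terms of actual transport of Ferrer points---and is closer in spirit to your second, ``refined'' proposal; note, however, that the paper sidesteps the cyclic-monotonicity verification you worry about by simply exhibiting the zero of a nonnegative function rather than invoking \refccyclicoptimal. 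One genuine error in your setup: $\phi$ is \emph{not} a bijection from $\mcD$ onto $\mcO=\msP_1^{perm}(n \mid all\ n_i\ odd)$. Already for $n=4$ one has $\mcD=\{(4),(3,1)\}$ while $\mcO=\{(3,1),(1,3),(1,1,1,1)\}$, so no bijection $\mcD\to\mcO$ exists; the bijection underlying Euler's identity is $\bar{\phi}:\mcD\to\msP_1(n\mid all\ n_i\ odd)$, obtained only after reordering the entries. Consequently your framing via equal-mass counting measures and bijective Monge maps collapses (the admissible class would be empty in general); you should instead compare $\phi$ against arbitrary maps $\psi\in\msF(\mcD,\mcO)$, as the theorem does, and observe that the indicator cost still makes $\phi$ pointwise optimal in that larger class, so the conclusion survives even though the transport-theoretic packaging does not.
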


We believe that an analogous statement can always be proven if there is an explicit bijection or algorithm for a partition identity.

\vsp

When identifying partitions with measures, the dimension of the partition is irrelevant: A partition $\pi \in \msP_\ell(n)$ can be seen as the measure $\de_\pi$ on $\R^{\ell+1}$ given by 
\beqs
\de_{\pi}= \sum_{i_1 =1 } ^{k_1} \dots \sum_{i_\ell=1}^{k_\ell} \sum_{\al=1}^{n_{i_1 \dots i_\ell}} \de_{(i_1, \dots, i_\ell, \al)}
\eeqs
where the $n_{i_1 \dots i_\ell}$ are monotone decreasing in each coordinate (see \refndimpartition). Similarly, we can use the Young tableaux and the restriction of the Lebesgue measure to cubes in $\R^{\ell+1}$. 
As in the one dimensional case, we can formulate the optimal transport setting for 
two partitions $\pi^-$, $\pi^+ \in \msP_\ell(n)$ via
\beqs
\mu^-:= \de_{\pi^-} \quad \mbox{and} \quad \mu^+:=\de_{\pi^+}
\eeqs
and look for a map $\phi$ with $\phi(\de_{\pi^-})=\de_{\pi^+}$ sending $\pi^-$ to $\pi^+$ in an optimal way, i.e.\ minimizing $\int_{\spt(\de_{\pi^-})}c(z, \phi(z)) d \de_{\pi^-}(z)$.

\vsp

When investigating higher dimensional partitions in future works, we would like to use results from the theory of optimal transportation. Since most results are stated for Lebesgue continuous measures $\mu^-$, it might be advisable to work with the Lebesgue measure induced from the cubes of the Young tableau rather than with the discrete measure coming form the Ferrer graph.
Moreover, the possibility to dualize the problem as done in \eqref{dual} might be of help.

\vsp

It is interesting to note, as pointed out to us by Leonid Polterovich, that discrete measures and the Monge-Kantorovich distance find applications in statistics and computer science (in particular in artificial intelligence), cf.\ Cuturi \cite{cuturi}.

\subsubsection*{Organisation of the paper}
Section 2 recalls important facts from the theory of optimal transportation. Section 3 defines integer partitions in any dimension by means of measure theory and formulates the setting for optimal transport of partitions. Section 4 applies the theory of optimal transportation to partitions.


\section{Optimal transport}

\label{sectionoptimaltransport}

The literature on mass transportation problems is vast, see for instance Villani \cite{villani} or Rachev $\&$ R\"uschendorf \cite{rachev-rueschendorf} for an overview.
In modern language, Monge's problem \cite{monge} can be formulated as follows. Let $M$ be a finite dimensional manifold and denote by $\mfS(M)$ its Borel $\sigma$-algebra and by $\mfM(M)$ the space of finite, positive Borel measures on $M$. Given a measurable map $\psi : M \to M$ and $\mu \in \mfM(M)$, the {\em image} or {\em push forward measure} $\psi(\mu)$ is defined via $\psi(\mu(B)):=\mu(\psi^{-1}(B))$ for all measurable $B \subset M$.

\begin{Problem}[Monge]
\label{monge}
\mbox{  }
\begin{longtable}{l p{100mm}}
{\bf Given:} & 
$\mu^-$, $\mu^+ \in \mfM(M)$ with $\mu^-(M)=\mu^+(M)$ and a measurable `{\textbf {\textit cost function}}' $c: M \x M \to \R^{\geq 0}$. \\
{\bf Wanted:} & 
A measurable `{\textbf {\textit optimal map}}' $\phi: M \to M$ which realizes the minimum of 
\beqs
C(\mu^-, \mu^+):= \inf\left\{ \left. \int_M c(x, \phi(x))d \mu^-(x) \right| \phi \mbox{ Borel, }\phi(\mu^-)=\mu^+\right\}.
\eeqs
\end{longtable}
\end{Problem}

In particular, in case $M=\R^m$ with point measures $\mu^-=\sum_{i=1}^\ell \de_{x_i}$ and $\mu^+=\sum_{i=1}^\ell\de_{y_i}$ and $c(x,y)=d(x,y)$ is a distance function, the map $\phi$ is transporting the mass of $\mu^-$ to the mass of $\mu^+$ while minimizing the `sum' $\sum_{i=1}^\ell d(x_i, \phi(x_i))$ of the total transported distances. But when working with point measures, we have to be cautious:

\begin{Remark}
\label{nomap}
If $\mu^-$ contains point measures, there does not necessarily exist a transport map: For $\mu^-=\de_x$ and $\mu^+=\frac{1}{2} \de_{y_1} + \frac{1}{2} \de_{y_2}$ there is no map $\phi $ with $\phi(\mu^-)=\mu^+$.
\end{Remark}

In 1979, Sudakov \cite{sudakov} proposed a proof of Monge's problem in case of $\R^m$ with the Euclidean distance as a cost function. Unfortunately the proof turned out to have a gap (cf. Ambrosio \cite[p. 137]{ambrosio1}, \cite[chapter 6]{ambrosio2}) which can only be mended under stronger assumptions.

\vspace{3mm}

Kantorovich \cite{kantorovich 42, kantorovich 48} came up with another approach to Monge's problem which is much easier to handle. Denote by $p^-: M \x M \to M$, $p^-(x,y)=x$ the projection on the first factor and by $p^+$ the projection on the second one. Let $\mu^-$, $\mu^+ \in \mfM(M)$ with $\mu^-(M)=\mu^+(M)$ and define 
\beqs
\mfM(\mu^-, \mu^+):=\{ \mu\in \mfM(M \x M) \mid p^-(\mu)=\mu^- \mbox{ and } \ p^+(\mu)=\mu^+\}.
\eeqs

\begin{Problem}[Kantorovich]
\label{kantorovich}
\mbox{  }
\begin{longtable}{l p{100mm}}
{\bf Given:} & $\mu^-$, $\mu^+ \in \mfM(M)$ and a `{\textbf {\textit cost function}}' $c: M \x M \to \R^{\geq 0}$. \\
{\bf Wanted:} & An `{\textbf {\textit optimal measure}}' $\mu \in \mfM(\mu_-, \mu_+)$ which realizes the minimum of
\beqs
K(\mu^-, \mu^+):= \inf\left\{ \left. \int_{M \x M} c(x,y) d\mu(x,y) \right| \mu \in \mfM(\mu^-, \mu^+) \right\}.
\eeqs
\end{longtable}
\end{Problem}

In contrast to Monge's problem, Kantorovich's setting is linear in $\mu$ and $\mfM(\mu^-, \mu^+)$ is convex. Moreover, under reasonable assumptions on $\mu^-$, $\mu^+$ and $c$, there exists always an optimal measure on $M$ (via a standard compactness argument using the calculus of variations).

\vspace{3mm}

Monge's and Kantorovich's problem are linked as follows. If there is a measurable map $\phi: M \to M$ with $\phi(\mu^-)=\mu^+$, then define $\Id \x \phi: M \to M \x M$, $x \mapsto (x, \phi(x))$. The image measure satisfies $(\Id \x \phi)(\mu^-) \in \mfM(\mu^-, \mu^+)$ and its support lies in the graph of $\phi$. We compute
\begin{align*}
\inf_{\phi \ with \ \phi(\mu^-)=\mu^+} \int_M c(x,\phi(x)) d\mu^-(x) 
& = \inf_{\phi \ with \ \phi(\mu^-)=\mu^+} \int_{M \x M} c(x,y) d (\Id \x \phi)(\mu^-)(x,y) \\
& \geq \min_{\mu \in \mfM(\mu^-, \mu^+)} \int_{M \x M} c(x,y) d\mu(x,y)
\end{align*}
such that Kantorovich's problem yields a lower bound for Monge's problem. The question when an optimal $\mu$ is of the form $\Id \x \phi$ was studied by Gangbo $\&$ McCann \cite{gangbo-mccann} on $\R^m$ with Lebesgue continuous $\mu^-$ for strictly convex and strictly concave cost functions. 

If the cost function is of the form $c(x,y)=h(x-y)$ where $h$ is strictly convex and satisfies certain growth conditions then Gangbo $\&$ McCann \cite{gangbo-mccann} find a unique optimal $\mu \in \mfM(\mu^-, \mu^+)$ for Kantorovich's problem which turns out to be of the form $(\Id \x \phi)(\mu^-)$. Thus they obtain also an optimal map for Monge's problem. This $\phi$ is explicitly given by $\phi(x)= x- \nabla h^{-1}(\nabla \psi(x))$ where $\psi :\R^m \ra \R \cup \{-\infty\}$ is a so-called $c$-concave function, i.e. $\psi$ is not identical $ -\infty$ and $\psi(x):= \inf_{(y, r)\in A}\{c(x,y) + r\}$ for a subset $A\subseteq \R^m \times \R$.

If $c(x,y)=f(\abs{x-y})$ (where the function $f$ is non negative) is strictly concave then the cost function induces a metric such that a minimal measure does not `move' the intersection set of the support of $\mu^-$ and $\mu^+$. Thus one only obtains a map if the two measures have disjoint support. Otherwise one only gets a map for the positive parts of the Jordan decomposition $[\mu^- -\mu^+]_+$ and $[\mu^+ -\mu^-]_+$.

\vsp

An essential tool in Gangbo $\&$ McCann's work is the so-called $c$-cyclic monotonicity (cf. \refccyclic) which was introduced by Smith $\&$ Knott \cite{smith-knott} and R\"uschendorf \cite{rueschendorf}, \cite{rachev-rueschendorf}.

\vsp

There is another way to approach Kantorovich's problem: Since it is a `convex problem' Kantorovich \cite{kantorovich 42} (and later others) dualized it, i.e. they consider 
\begin{equation}
\label{dual}
\min_{\mu \in \mfM(m^-, \mu^+)} \int_{M \x M} c(x,y) d\mu(x,y) = \sup \left\{\int_M h_-(x)d\mu^-(x) +\int_M h_+(y)d\mu^+(y)\right\}
\end{equation}
where the supremum is taken over all $(h_-, h_+) \in L^1(\mu^-) \x L^1(\mu^+)$ with $h_-(x) + h_+(y)\leq c(x,y)$.


\section{Integer Partitions}

\label{sectionintegerpartitions}

Integer partitions naturally arise in many places throughout mathematics, physics and computer sciences. They have been studied already by Euler in the 18th century and later by Legendre, Ramanujan, and Hardy to name just a few. We refer the reader to the two monographs by Andrews \cite{andrews} and Andrews $\&$ Eriksson \cite{andrews-eriksson} for further information on the history of integer partitions.

\subsection{Partitions as measures}

Since, on the one hand, 1-dimensional partitions are much more intuitive and, on the other hand, most of \refsectionapplications\ deals with 1-dimensional partitions, we first present the new concept for 1-dimensional partitions before we generalize it to the higher dimensional case.

\begin{Definition}
Let $n \in \N$. Then a {\textbf {\textit one dimensional partition}} of $n$ is a tuple of ordered integers $(n_1, \dots, n_k)$ with $n \geq n_1 \geq \dots \geq n_k \geq 1$ for some integer $1 \leq k \leq n$ such that $\sum_{i=1}^k n_i =n$. The {\textbf {\textit set of one dimensional partitions}} of $n$ is denoted by $\msP(n):=\msP_1(n)$ and $p(n):=p_1(n):=\abs{\msP_1(n)}$ is its cardinality. We often abbreviate `one dimensional partitions' by `partitions'. 
\end{Definition}

Let us introduce some notations.

\begin{Definition}
Let $n \in \N$ and $\pi\in \msP_1(n)$. We call $\abs{\pi}:=n$ the {\textbf {\textit cardinality}} of $\pi$. Instead of $\pi= (n_1, \dots, n_{k(\pi)})$ with $\sum_{i=1}^{k(\pi)} n_i =n$ we often write by abuse of notation $\pi=(n_1, \dots, n_k)$ abbreviating $k=k(\pi)$. The number $k=k(\pi)$ is the {\textbf {\textit length}} of $\pi$. 

If $E$ denotes a certain property, we define 
$$\msP(n \mid E):=\msP_1(n \mid E):=\{ \pi \in \msP_1(n) \mid \pi \ has\ property\ E\}$$ 
and denote by $p(n \mid E):=p_1(n \mid E)$ its cardinality.
\end{Definition}

For a subset $A \subset \R^\ell$, define its {\em characteristic function} $\chi_A: \R^\ell \to \{0,1\}$ via  $\chi(x)=1$ if $x \in A$ and $\chi(x)=0$ if $x \notin A$. Finite sums of characteristic functions are called {\em stair functions}. $\msL_\ell$ is the $\ell$-dimensional Lebesgue measure and $\de_z$ denotes the point measure at a point $z\in \R^\ell$. If $\mu$ is a measure then $\spt(\mu)$ denotes its support. We abbreviate $\R_+^\ell:= ([0, \infty[)^\ell$. 

\vsp

Let $p_x: \R^2 \to \R$, $(x, y) \mapsto x$ be the projection on the first and $p_y: \R^2 \to \R$, $(x, y) \mapsto y$ be the projection on the second coordinate.
The idea of this paper is to identify partitions with measures such that we can apply optimal transport theory.

\begin{Proposition}
\label{equivstatements}
The following statements are equivalent.
\begin{enumerate}[1)]
 \item  
$(n_1, \dots, n_k) \in \msP_1(n)$.
  \item
$\pi:=(n_1, \dots, n_k) \in \N^k$ and $\rho_\pi: \R_+ \to \{n_1, \dots, n_k\}$, $\rho_\pi:= \sum_{i=1}^k n_i \chi_{]i-1, i]}$ is a monotone decreasing stair function with $\int_{\R_+} \rho_\pi (x)d \msL_1(x) =n$. We set $\msL_\pi:=\rho_\pi \msL_1$.
  \item
Let $\pi:=(n_1, \dots, n_k) \in \N^k$ and denote by $\xi_{i,\al}$ the center of the square $[i-1, i] \x [\al-1, \al]$. The measure $\de_\pi:=\sum_{i = 1}^k \sum _{\al =1}^{n_i} \de_{\xi_{i \al}}$ satisfies $\de_\pi(\R^2)=n$ and the image measure $p_x(\de_\pi)$ has a monotone decreasing density function on $\frac{1}{2} + \N_0$.
  \item
Let $\pi:=(n_1, \dots, n_k) \in \N^k$. Then $\deti_\pi:=\sum_{i = 1}^k \sum _{\al =1}^{n_i} \de_{(i,\al)}$ satisfies $\deti_\pi(\R^2)=n$ and the image measure $p_x(\deti_\pi)$ has a monotone decreasing density function on $\N$.
\end{enumerate}
\end{Proposition}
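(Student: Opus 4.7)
The plan is to prove the chain $1) \Rightarrow 2) \Rightarrow 4) \Rightarrow 3) \Rightarrow 1)$, where each implication is essentially an unpacking of definitions: a partition is determined by the tuple of positive integers $(n_1, \dots, n_k)$ together with the monotonicity and summation conditions, and each of 2), 3), 4) merely repackages that data as a measure-theoretic object whose natural ``density'' on the appropriate support is the sequence $(n_1, \dots, n_k)$. I would set up a single common notation at the outset ($\pi = (n_1, \dots, n_k) \in \N^k$) and then translate the conditions.

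For $1) \Leftrightarrow 2)$, the tuple $\pi \in \N^k$ is common to both statements, and $n_i \geq 1$ is automatic from $n_i \in \N$. Since $\rho_\pi$ takes the constant value $n_i$ on $]i-1,i]$ (and vanishes on $]k,\infty[$), $\rho_\pi$ is monotone decreasing on $\R_+$ if and only if $n_1 \geq n_2 \geq \dots \geq n_k$ (the final step drops down to $0$, which is $\leq n_k$). The integral identity is a direct computation:
\beqs
\int_{\R_+} \rho_\pi \, d\msL_1 = \sum_{i=1}^k n_i \, \msL_1(\,]i-1,i]\,) = \sum_{i=1}^k n_i,
\eeqs
so $\int \rho_\pi \, d\msL_1 = n$ is equivalent to $\sum n_i = n$, which in turn forces $k \leq n$ since $n_i \geq 1$.

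For $2) \Leftrightarrow 4)$, I would compute $\deti_\pi(\R^2) = \sum_{i=1}^k \sum_{\al=1}^{n_i} 1 = \sum n_i$, so the total mass condition in 4) is the same as the integral condition in 2). Applying $p_x$ to the definition gives $p_x(\deti_\pi) = \sum_{i=1}^k n_i \, \de_i$, a measure supported on $\{1,\dots,k\} \subset \N$ with density (with respect to counting measure on $\N$) equal to the map $i \mapsto n_i$ (extended by $0$). This density is monotone decreasing on $\N$ precisely when $n_1 \geq \dots \geq n_k \geq 0$, which matches the monotonicity of $\rho_\pi$ from 2).

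For $3) \Leftrightarrow 4)$, I would observe that $\xi_{i\al} = (i-\tfrac{1}{2},\al-\tfrac{1}{2})$, so $\de_\pi$ is the pushforward of $\deti_\pi$ under the translation $T(x,y)=(x-\tfrac{1}{2},y-\tfrac{1}{2})$; hence $\de_\pi(\R^2) = \deti_\pi(\R^2)$ and $p_x(\de_\pi)$ is the translate of $p_x(\deti_\pi)$ by $-\tfrac{1}{2}$, which shifts the supporting lattice from $\N$ to $\tfrac{1}{2}+\N_0$ but preserves the values of the density and its monotonicity. I don't expect any real obstacle: the only mildly delicate point is to fix, once and for all, what is meant by ``density function on $\N$'' and ``on $\tfrac{1}{2}+\N_0$'', namely the Radon--Nikodym density with respect to counting measure on the indicated discrete set, equivalently the coefficients of $p_x(\de_\pi)$ and $p_x(\deti_\pi)$ in their representations as sums of point masses. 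Once this convention is pinned down, each of the four equivalences reduces to a one-line verification.
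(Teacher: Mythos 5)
Your proof is correct: the paper itself gives no argument, stating only that ``the proof is obvious,'' and your chain $1)\Rightarrow 2)\Rightarrow 4)\Rightarrow 3)\Rightarrow 1)$ is exactly the routine definition-unpacking the author had in mind. Nothing is missing; you have simply written out the details the paper omits.
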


The proof is obvious. Instead of using discrete measures in item 3) and 4), we can as well consider the restriction of the Lebesgue measure to the square $[i-1, i] \x [\al-1, \al]$ and obtain a continuous measure characterizing the partition.

Note that, for $\pi=(n_1, \dots, n_k) \in \msP(n)$, the push forward measure $p_x( \msL_1|_{Y(\pi)})$ has $\rho_\pi$ as density function. Conversely, $\msL_1|_{Y(\pi)}$ can be recovered from $\rho_\pi$.

\vsp

Now we consider $m$-dimensional partitions. We abbreviate multi-indices like $\left(\begin{smallmatrix} 1 \leq i_1 \leq k_1 \\ \vdots \\ 1 \leq i_m \leq k_m \end{smallmatrix} \right)$ by $1 \leq i_1, \dots, i_m \leq k_1, \dots, k_m$.

\begin{Definition}
\label{ndimpartition}
Let $n \in \N$. An {\textbf {\textit m-dimensional partition}} of $n$ is an array consisting of $n_{i_1 \dots i_m} \in \N$ where $1 \leq i_1, \dots, i_m \leq k_1, \dots, k_m$ for some natural numbers $1 \leq k_1, \dots, k_m \leq n$ such that for each index $i_j = 1, \dots, k_j$ with $1 \leq j \leq m$ the natural numbers $n_{i_1 \dots i_m}$ are a monotone decreasing sequence with $ n \geq \max_{i_j \in \{ 1, \dots, k_j\}} n_{i_1, \dots, i_m}$ and $\min_{i_j \in \{ 1, \dots, k_j\}} n_{i_1, \dots, i_m} \geq 1$ and $\sum_{i_1=1}^{k_1} \dots \sum_{i_m=1}^{k_m} n_{i_1 \dots i_m}=n$.
$\msP_m(n)$ denotes the {\textbf {\textit set of m-dimensional partitions}} and $p_m(n):=\abs{ \msP_m(n)}$ denotes its cardinality.
\end{Definition}

As an example, consider Figure \ref{2dimyoung} where (a) displays the Young tableau of
$\left[\begin{smallmatrix}
1 & \\ 2 & 1                                                                                                                                                                                                                                                                                                                                                                                                                                                                                                                                                                                                                                                                                                                                                                                                                                \end{smallmatrix} \right]
\in \msP_2(4)
$
and (b) shows the Young tableau of
$\left[ \begin{smallmatrix} 1 \\ 1 \end{smallmatrix} \right] \in \msP_2(2)$.

\vsp
A function $g: A \subseteq \R^m \to \R$ is {\em monotone decreasing} if the functions $x_i \mapsto g(x_1, \dots, x_m)$ are monotone decreasing for all $1 \leq i \leq m$. Moreover, given two monotone decreasing functions $g_1$, $g_2 : A \subseteq \R^m \to \R$, then their sum $g_1 + g_2$ is monotone decreasing.
Denote by $p_{x_1 \dots x_{m}}: \R^{m+1} \to \R^m$, $(x_1, \dots, x_{m+1}) \mapsto (x_1, \dots, x_m)$ the projection which forgets the last coordinate.

\begin{Proposition}
The following statements are equivalent.
\begin{enumerate}[1)]
 \item 
$\pi=(n_{i_1 \dots i_m})_{1 \leq i_1, \dots , i_m \leq k_1, \dots, k_m} \in \mcP_m(n)$.
  \item
Let $\pi= (n_{i_1 \dots i_m})_{1 \leq i_1, \dots , i_m \leq k_1, \dots, k_m} \in \N^{k_1 + \dots +k_m}$ and set $\rho_\pi: \R_+^m \to \{ n_{i_1 \dots i_m} \in \N \mid 1 \leq i_1, \dots , i_m \leq k_1, \dots, k_m \}$ with $\rho_\pi= \sum_{i_1 =1}^{k_1} \dots \sum_{i_m =1}^ {k_m} n_{i_1 \dots i_m} \chi_{C_{i_1 \dots i_m}}$ where $C_{i_1 \dots i_m}:= \ ]i_1 -1, i_1] \x \dots \x\ ]i_m -1, i_m]$. Then $\rho_\pi$ is a monotone decreasing stair function with $\int_{\R_+^m} \rho_\pi(x_1,\dots, x_m) d \msL_m(x_1, \dots,x_m) =n$. We set $\msL_\pi:= \rho_\pi \msL_m$.
  \item
Let $\pi= (n_{i_1 \dots i_m})_{1 \leq i_1, \dots , i_m \leq k_1, \dots, k_m} \in \N^{k_1 + \dots +k_m}$ and denote by $\xi_{i_1 \dots i_m \al}$ the center of the cube $[i_1-1, i_1] \x \dots \x [i_m-1, i_m] \x [\al-1 , \al]$. The measure $\de_\pi:= \sum_{i_1=1}^{k_1} \dots \sum_{i_m=1}^{k_m} \sum_{\al=1}^{n_{i_1 \dots i_m}} \de_{\xi_{i_1 \dots i_m\al}}$ satisfies $\de_\pi(\R^{m+1})=n$ and the image measure $p_{x_1 \dots x_m}(\de_\pi)$ has a monotone decreasing density function on $(\frac{1}{2} + \N_0)^m$.
  \item
Let $\pi= (n_{i_1 \dots i_m})_{1 \leq i_1, \dots , i_m \leq k_1, \dots, k_m} \in \N^{k_1 + \dots +k_m}$. The measure $\deti_\pi:= \sum_{i_1=1}^{k_1} \dots \sum_{i_m=1}^{k_m} \sum_{\al=1}^{n_{i_1 \dots i_m}} \de_{(i_1,\dots, i_m,\al)}$ satisfies $\deti_\pi(\R^m)=n$ and the image measure $p_{x_1 \dots x_m}(\deti_\pi)$ has a monotone decreasing density function on $ \N^m$.
\end{enumerate}
\end{Proposition}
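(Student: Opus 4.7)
The proof is a direct generalization of Proposition \refeqivstatements\ to the multi-index setting, and amounts to checking that each of the three conditions built into the definition of an $m$-dimensional partition (positivity, the sum condition $\sum n_{i_1 \dots i_m} = n$, and monotonicity in each index) translates into the corresponding condition on $\rho_\pi$, $\de_\pi$, and $\deti_\pi$. My plan is to prove $1) \Leftrightarrow 2)$, $1) \Leftrightarrow 3)$, and $1) \Leftrightarrow 4)$ separately, since each of 2), 3), 4) is an independent re-encoding of the same combinatorial data.

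For $1) \Leftrightarrow 2)$: the stair function $\rho_\pi$ takes the constant value $n_{i_1 \dots i_m}$ on the unit cube $C_{i_1 \dots i_m}$, so I would note that $\rho_\pi$ is monotone decreasing in $x_j$ if and only if, for every fixed choice of the remaining indices, $n_{i_1 \dots i_j \dots i_m} \geq n_{i_1 \dots (i_j + 1) \dots i_m}$ for $1 \leq i_j < k_j$, which is exactly the monotonicity requirement on the array. Since each cube has $\msL_m$-volume $1$, one computes
\beqs
\int_{\R_+^m} \rho_\pi \, d\msL_m = \sum_{i_1=1}^{k_1} \dots \sum_{i_m=1}^{k_m} n_{i_1 \dots i_m},
\eeqs
which equals $n$ iff the sum condition holds. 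The bounds $1 \leq n_{i_1 \dots i_m} \leq n$ follow from $\pi \in \N^{k_1 + \dots + k_m}$ together with the sum condition, so no separate check is needed.

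For $1) \Leftrightarrow 3)$: summing the point masses in $\de_\pi$ gives $\de_\pi(\R^{m+1}) = \sum_{i_1, \dots, i_m} n_{i_1 \dots i_m}$, which is $n$ iff the sum condition holds. The projection $p_{x_1 \dots x_m}$ sends every point $\xi_{i_1 \dots i_m \alpha}$ to the common point $(i_1 - \tfrac{1}{2}, \dots, i_m - \tfrac{1}{2}) \in (\tfrac{1}{2} + \N_0)^m$, independently of $\alpha$, so the push-forward is an atomic measure whose mass at this point equals $n_{i_1 \dots i_m}$. Reading ``density function on $(\tfrac{1}{2} + \N_0)^m$'' as the mass function on this lattice, monotonicity in each coordinate direction coincides with the monotonicity of the array in each of its indices. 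The argument for $1) \Leftrightarrow 4)$ is word-for-word identical, simply replacing cube centers by the corner points $(i_1, \dots, i_m) \in \N^m$.

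There is no substantive obstacle here; the proof is essentially unpacking of notation. The one point requiring a small comment, which I would address explicitly, is the meaning of ``monotone decreasing density function'' in statements 3) and 4): since the push-forward measures are purely atomic, ``density'' must be interpreted with respect to counting measure on $(\tfrac{1}{2} + \N_0)^m$ (respectively $\N^m$), i.e.\ as the mass function at each lattice point. Once that convention is fixed, all three equivalences reduce to observing that the three objects $\rho_\pi$, $p_{x_1 \dots x_m}(\de_\pi)$ and $p_{x_1 \dots x_m}(\deti_\pi)$ are three different faithful representations of the same array $(n_{i_1 \dots i_m})$.
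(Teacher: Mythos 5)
Your proposal is correct and matches the paper's intent: the paper itself only remarks that, using Proposition \refeqivstatements\ for intuition, the proof is obvious, and your write-up is exactly the routine unpacking of definitions that this remark refers to. The only thing you add beyond the paper is the explicit convention that ``density'' means the mass function with respect to counting measure on the lattice, which is a reasonable clarification and consistent with how the paper uses the term.
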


Using \refeqivstatements\ for intuition, the proof is obvious.
Let $p_{x_1, \dots, x_{j-1}, x_{j+1}, \dots, x_m}: \R^m \to \R^{m-1}$ be the projection which forgets the $j$th coordinate.

\begin{figure}[h]
\begin{center} 
\input{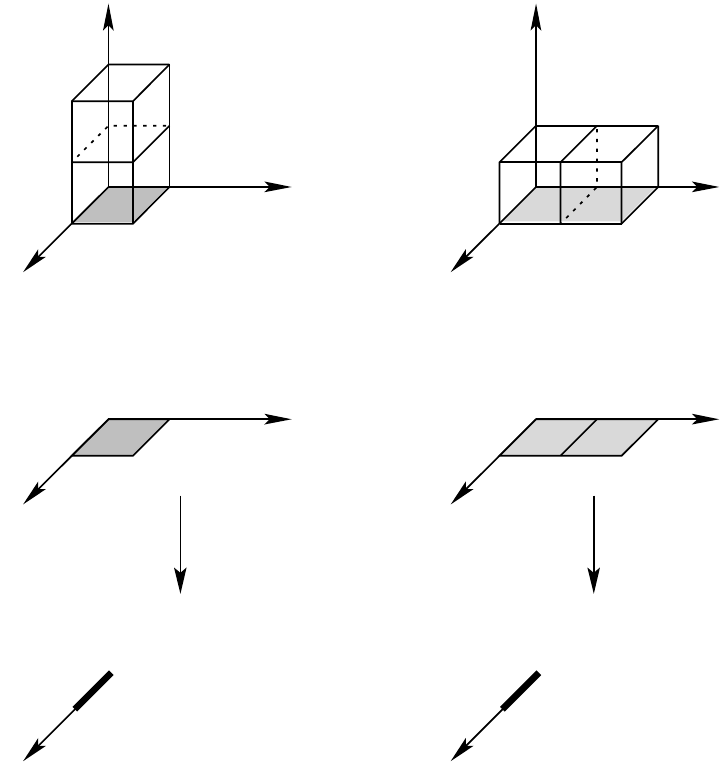_t}
\caption{Different 2-dimensional partitions can have the same $p_{x_1}$-projection.}
\label{projections}
\end{center}
\end{figure}

\begin{Remark}
\begin{enumerate}[1)]
\item
Given a partition $\pi= (n_{i_1 \dots i_m})_{1 \leq i_1, \dots , i_m \leq k_1, \dots, k_m} \in \msP_m(n)$, the support of $\de_\pi$ is called the {\textbf {\textit Ferrer graph}} and the union of cubes $Y(\pi):=\bigcup_{i_1=1}^{k_1} \dots \bigcup_{i_m=1 }^{k_m} \bigcup_{\al=1}^{n_{i_1 \dots i_m}} [i_1-1, i_1] \x  \dots \x [i_m-1, i_m] \x [\al-1, \al]$ is called the {\textbf {\textit Young tableau}} or {\textbf {\textit Ferrer board}} of $\pi$.
 \item 
$\msL_\pi$ determines the $m$ push forward measures $p_{x_1, \dots, x_{m-1}}(\msL_\pi)$, \dots, $p_{x_2, \dots, x_m}(\msL_\pi)$ uniquely and, conversely, these $m$ push forward measures determine $\msL_\pi$ uniquely.
\item
Let $\pi \in \msP_m(n)$. Then the measures $p_{x_1, \dots, x_{m-1}}(\msL_\pi)$, \dots, $ p_{x_2, \dots, x_m}(\msL_\pi)$ represent the partitions $p_{x_1, \dots, x_{m-1}}(\pi)$, \dots, $p_{x_2, \dots, x_{m}}(\pi)$. This is of particular interest to the Kantorovich problem.
\end{enumerate}
\end{Remark}

Note that we really need all $m$ push forward measures in item 2) in order to determine $\msL_\pi$ uniquely. Otherwise it is not true, cf.\ Figure \ref{projections}.


\subsection{Optimal transport of integer partitions}

Let $m$, $n \in \N$ and $\pi \in \msP_m(n)$. The partition $\pi$ can be displayed by the continuous measure $\msL_\pi$ as well as by the discrete measures $\de_\pi$ and $\deti_\pi$. Let us first consider the latter ones. Using the above notation, we have 
\begin{align*}
\spt(\de_\pi) & =\{ \xi_{i_1 \dots i_m \al} \mid 1\leq i_1, \dots, i_m, \al \leq k_1, \dots, k_m, n_{i_1 \dots i_m} \}, \\
\spt(\deti_\pi)& =  \{ (i_1, \dots, i_m, \al) \mid 1\leq i_1, \dots, i_m, \al \leq k_1, \dots, k_m, n_{i_1 \dots i_m} \}
\end{align*}
which are sets of isolated points. For all $\pi \in \msP_m(n)$, the cardinality of the supports is $\abs{\spt(\de_\pi)}=n=\abs{\spt(\deti_\pi)}$. Thus, given two partitions $\pi^-$, $\pi^+ \in \msP_m(n)$, there {\em always exists} a bijective map $f: \spt(\pi^-) \to \spt(\pi^+)$, i.e. there are no  obstructions as in \refnomap. We denote the space of such maps by $\msF(\pi^-, \pi^+)$ and its cardinality is {\em finite}.
In this context, Monge's problem translates into looking for 
\begin{align*}
C(\pi^-, \pi^+):= \min_{f \in \msF(\pi^-, \pi^+)} \int_{\spt(\de_{\pi^-})} c(z, f(z)) d \de_\pi(z) 
= \min_{f \in \msF(\pi^-, \pi^+)} \sum_{z \in \spt(\de_{\pi^-})} c(z, f(z))
\end{align*}

We will be working a lot with the following type of cost functions.

\begin{Definition}
A measurable cost function $c: \R^m \x \R^m \to \R$ is \textbf{\textit{metric-like}} if $c$ induces a metric on $\R^m$, i.e. 
\begin{enumerate}[a)]
 \item 
$c \geq 0$ with $c(x,y)=0$ if and only if $x=y$.
  \item
$c(x,y)=c(y,x)$ for all $x$, $y \in \R^m$.
  \item
$c(x,z) \leq c(x,y) + c(y,z)$ for all $x$, $y$, $z \in \R^m$.
\end{enumerate}
\end{Definition}

Gangbo $\&$ McCann \cite{gangbo-mccann}, using Lebesgue continuous measures and concave cost functions, showed that the optimal map can be chosen to be the identity on the intersection set $\spt(\mu^-) \cap \spt(\mu^+)$. This holds true for our metric-like cost functions: 

\begin{Proposition}
\label{idonspt}
Let $\pi^-$, $\pi^+ \in \msP_m(n)$ and let $c$ be a metric-like cost function. Then an optimal $f \in \msF(\pi^-, \pi^+)$ can be chosen to be the identity on $\spt(\de_{\pi^-}) \cap \spt(\de_{\pi^+})$.
\end{Proposition}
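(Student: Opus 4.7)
The plan is to start from an arbitrary optimal $f \in \msF(\pi^-, \pi^+)$ and perform a sequence of cost-non-increasing swaps, each one strictly enlarging the set of points of $I := \spt(\de_{\pi^-}) \cap \spt(\de_{\pi^+})$ on which $f$ acts as the identity. Since $|I| \leq n < \infty$, this process terminates after finitely many steps with an optimal map fixing every point of $I$.

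The atomic step is the following swap argument. Suppose $f$ is optimal but $f(x) \neq x$ for some $x \in I$. Because $f$ is a bijection from $S^- := \spt(\de_{\pi^-})$ onto $S^+ := \spt(\de_{\pi^+})$ and $x \in S^+$, there exists a unique $y \in S^-$ with $f(y) = x$; note $y \neq x$, for otherwise $f(x) = x$. Define $f' \in \msF(\pi^-, \pi^+)$ by $f'(x) := x$, $f'(y) := f(x)$, and $f'(z) := f(z)$ for all other $z \in S^-$. Comparing the total costs, the only contributions that change are those at $x$ and $y$:
\beqs
\text{old: } c(x, f(x)) + c(y, x), \qquad \text{new: } c(x, x) + c(y, f(x)) = c(y, f(x)),
\eeqs
where $c(x,x) = 0$ by the metric-like property. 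The triangle inequality together with symmetry gives
\beqs
c(y, f(x)) \leq c(y, x) + c(x, f(x)),
\eeqs
so the new cost is at most the old one. Since $f$ was optimal, equality must hold and $f'$ is optimal as well.

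It remains to check that iterating this swap increases the set $\Fix_I(f) := \{z \in I : f(z) = z\}$ strictly. The only points at which $f'$ differs from $f$ are $x$ and $y$. The point $x$ is now fixed. The point $y$ was not previously fixed because $f(y) = x \neq y$, so removing it from $\Fix_I$ costs nothing. Every other previously fixed point is untouched. Hence $\Fix_I(f') \supseteq \Fix_I(f) \cup \{x\}$, a strict enlargement. After at most $|I|$ iterations we arrive at an optimal map that is the identity on $I$.

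The main (and essentially only) obstacle is ensuring that each swap does not destroy fixed points already achieved. This is settled by observing that the preimage $y := f^{-1}(x)$ cannot lie in $\Fix_I(f)$, since $f(y) = x \neq y$. The rest is a direct application of the triangle inequality and $c(x,x)=0$, both packaged into the definition of a metric-like cost function, so no compactness or continuity argument is needed — this is what makes the discrete setting of integer partitions comfortable.
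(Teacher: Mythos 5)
Your proof is correct and follows essentially the same route as the paper's: the same two-point swap (your $x,y$ are the paper's $f(z),z$), justified by the same triangle inequality plus $c(x,x)=0$, iterated until the intersection is fixed. The only difference is cosmetic --- you make the termination of the iteration explicit via the strictly growing set $\Fix_I(f)$, which the paper leaves implicit.
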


\begin{proof}
We study where the points in $\spt(\de_{\pi^-})$ are mapped to and if one can minimize the transport costs.

{\em Case 1:} Let $z \in \spt(\de_{\pi^-})$ be mapped to $ f(z) \in \spt(\de_{\pi^-}) \cap \spt(\de_{\pi^+})$. Since $f(z) \in \spt(\de_{\pi^-})$, it is mapped to $f(f(z))$. If $f(f(z)) \neq f(z)$, then we have $c(z, f(z)) + c(f(z), f(f(z))) \geq c(z, f(f(z))) + c(f(z), f(z))= c(z, f(f(z)))$. Now define a new map $\fti \in \msF(\pi^-, \pi^+)$ via $\fti(z):= f(f(z))$ and $\fti(f(z))=f(z)$. We obtain $\int_{\spt(\de_{\pi^-})} c(z,f(z)) d \de_{\pi^-}(z) \geq \int_{\spt(\de_{\pi^-})} c(z,\fti(z)) d \de_{\pi^-}(z)$. Iterating this procedure, we obtain a function which leaves all points in $\spt(\de_{\pi^-}) \cap \spt(\de_{\pi^+})$ fixed and has equal or lower transport costs than any function which does not fix the points in $\spt(\de_{\pi^-}) \cap \spt(\de_{\pi^+})$.

{\em Case 2:} Let $z \in \spt(\de_{\pi^-})$ be mapped to $ f(z) \in \spt(\de_{\pi^+}) \setminus \spt(\de_{\pi^-})$. The point $f(z)$ is not in the support of $\de_{\pi^-}$, so $f(z)$ is mapped nowhere by $f$. There is no need to modify $f$.
\end{proof}

Moreover, we can define a distance for partitions.

\begin{Remark}
Let $\pi^-$, $\pi^+ \in \msP_m(n)$ and let $c$ be a metric-like cost function. Then $\dist(\pi^-, \pi^+):= \min_{f \in \msF(\pi^-, \pi^+)} \int_{\spt(\de_{\pi^-})} c(z, f(z)) d \de_\pi(z) $ is a metric on $\msP_m(n)$.
\end{Remark}

The following definition is important. It goes back to Smith $\&$ Knott \cite{smith-knott} and R\"uschendorf \cite{rueschendorf}.

\begin{Definition}
\label{ccyclic}
Let $c$ be a metric-like cost function. A subset $B \subset \R^m \x \R^m$ is {\textbf{\textit{$c$-cyclic monotone}}} if $\sum_{i=1}^k c(x_i, y_i) \leq \sum_{i=1}^k c(x_{\si(i)}, y_i)$ for all $k \in \N$ and $(x_i, y_i) \in B$ for $1 \leq i \leq k$ and $\si \in Perm(k)$.
\end{Definition}

Gangbo $\&$ McCann \cite{gangbo-mccann} proved that the support of an optimal measure for the Kantorovich problem is $c$-cyclic monotone and that the graph of an optimal map of the Monge problem always lies in a $c$-cyclic monotone set. The following is the according statement for our situation.

\begin{Theorem}
\label{ccyclicoptimal}
Let $\pi^-$, $\pi^+ \in \msP_m(n)$ and let $c$ be a metric-like cost function. Then $\phi \in \msF(\pi^-, \pi^+)$ is optimal for $C(\pi^-, \pi^+)$ if and only if $\graph (\phi)$ is $c$-cyclic monotone.
\end{Theorem}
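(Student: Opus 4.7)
The plan is to prove both directions by explicit permutation-swap arguments on the $n$-element set $\spt(\de_{\pi^-})$. This is clean because $\msF(\pi^-,\pi^+)$ is a finite set of bijections onto the $n$-element set $\spt(\de_{\pi^+})$, so no measure-theoretic subtleties arise and one can freely modify $\phi$ on a finite number of points.

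For the forward implication ($\phi$ optimal $\Rightarrow$ $\graph(\phi)$ is $c$-cyclic monotone) I would argue by contrapositive. Suppose $\graph(\phi)$ fails the $c$-cyclic monotonicity condition: there exist (without loss of generality distinct) points $z_1,\dots,z_k\in\spt(\de_{\pi^-})$ and a permutation $\sigma\in Perm(k)$ with
\[
\sum_{i=1}^{k}c(z_i,\phi(z_i)) > \sum_{i=1}^{k}c(z_{\sigma(i)},\phi(z_i)).
\]
Define a new map $\ti\phi$ by setting $\ti\phi(z_{\sigma(i)}):=\phi(z_i)$ for $1\leq i\leq k$ and $\ti\phi(z):=\phi(z)$ on the rest of $\spt(\de_{\pi^-})$. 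Since $\sigma$ only permutes the images $\phi(z_1),\dots,\phi(z_k)$ among the same $k$ source points, $\ti\phi$ is still a bijection onto $\spt(\de_{\pi^+})$ and therefore belongs to $\msF(\pi^-,\pi^+)$; its total transport cost is strictly smaller than that of $\phi$, contradicting optimality.

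For the converse, assume $\graph(\phi)$ is $c$-cyclic monotone and let $\psi\in\msF(\pi^-,\pi^+)$ be arbitrary. Enumerate $\spt(\de_{\pi^-})=\{z_1,\dots,z_n\}$; then both $\phi(z_1),\dots,\phi(z_n)$ and $\psi(z_1),\dots,\psi(z_n)$ enumerate $\spt(\de_{\pi^+})$, so there is a unique $\tau\in Perm(n)$ with $\psi(z_i)=\phi(z_{\tau(i)})$ for every $i$. Applying the definition of $c$-cyclic monotonicity to the $n$ pairs $(z_i,\phi(z_i))$ with the permutation $\tau^{-1}$, and then reindexing $j=\tau^{-1}(i)$, gives
\[
\sum_{i=1}^{n}c(z_i,\phi(z_i)) \leq \sum_{i=1}^{n}c(z_{\tau^{-1}(i)},\phi(z_i)) = \sum_{j=1}^{n}c(z_j,\phi(z_{\tau(j)})) = \sum_{j=1}^{n}c(z_j,\psi(z_j)).
\]
Since $\psi\in\msF(\pi^-,\pi^+)$ was arbitrary, $\phi$ realizes the minimum $C(\pi^-,\pi^+)$.

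The argument is quite robust: it never invokes symmetry or the triangle inequality for $c$, so the full metric-like hypothesis is actually not needed here (it is essential earlier, notably for \refidonspt). The only genuine thing to watch is bookkeeping — in the forward direction, checking that $\ti\phi$ is still a bijection rather than accidentally collapsing two source points, and in the reverse direction, getting $\tau$ versus $\tau^{-1}$ on the correct side of the $c$-cyclic inequality. Neither is a real obstacle, which is exactly why the finite/discrete setting here makes the theorem much easier than its Gangbo--McCann analogue for Lebesgue continuous measures, where one has to work with approximate cyclic monotonicity on supports of measurable couplings.
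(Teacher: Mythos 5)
The paper states Theorem \ref{ccyclicoptimal} without proof --- it only points to the Gangbo--McCann analogue for Lebesgue-continuous measures --- so your argument is not a variant of the paper's proof but a self-contained replacement for an omitted one, and it is essentially correct. Both directions are sound: the swap construction in the forward direction does produce a bijection with strictly smaller cost, and in the reverse direction the permutation $\tau$ with $\psi(z_i)=\phi(z_{\tau(i)})$ exists precisely because elements of $\msF(\pi^-,\pi^+)$ are bijections, and your $\tau$-versus-$\tau^{-1}$ bookkeeping is right. Your closing observation that the metric-like hypothesis plays no role here (only in \refidonspt\ and elsewhere) is also correct.

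One step deserves an explicit sentence rather than a ``without loss of generality'': Definition \ref{ccyclic} quantifies over \emph{all} $k$-tuples of pairs in $B$, with repetition allowed, so a failure of $c$-cyclic monotonicity might a priori only be witnessed by a tuple with repeated points, in which case your $\ti{\phi}(z_{\si(i)}):=\phi(z_i)$ can be ill-defined (the same source point would receive two targets). The repair is standard but should be stated: since $\graph(\phi)$ is the graph of a function, $z_i=z_j$ forces $(z_i,\phi(z_i))=(z_j,\phi(z_j))$; decomposing $\si$ into cycles and splitting any cycle at a repeated pair leaves both sums $\sum_i c(z_i,\phi(z_i))$ and $\sum_i c(z_{\si(i)},\phi(z_i))$ unchanged, so any violating configuration yields a violating configuration with distinct points. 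With that one sentence added, the proof is complete.
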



\section{Optimal transport applied to integer partitions}

\label{sectionapplications}

\subsection{Symmetric partitions in $\msP_1(n)$}

Let us consider a special type of partitions.

\begin{Definition}
\label{symmetricpartition}
Let $T: \R^2 \to \R^2$, $(x, y) \mapsto (y,x)$ be the reflection on the $x=y$ line and let $\pi \in \msP_1(n)$ with Young tableau $Y(\pi)$. The {\textbf {\textit symmetric}} or {\textbf {\textit conjugate partition}} $sym(\pi) \in \msP_1(n)$ of $\pi$ is the partition with the Young tableau $Y(sym(\pi))=T(Y(\pi))$, i.e. we obtain $sym(\pi)$ by reflecting $\pi$ on the $x=y$ line. Partitions $\pi \in \msP_1(n)$ with $sym(\pi)=\pi$ are called {\textbf {\textit self-symmetric}} or {\textbf {\textit self-conjugate}}.
\end{Definition}

An example for a partition and its symmetric partition is sketched in Figure \ref{symmetric}.

\vsp

\begin{figure}[h]
\begin{center} 
\input{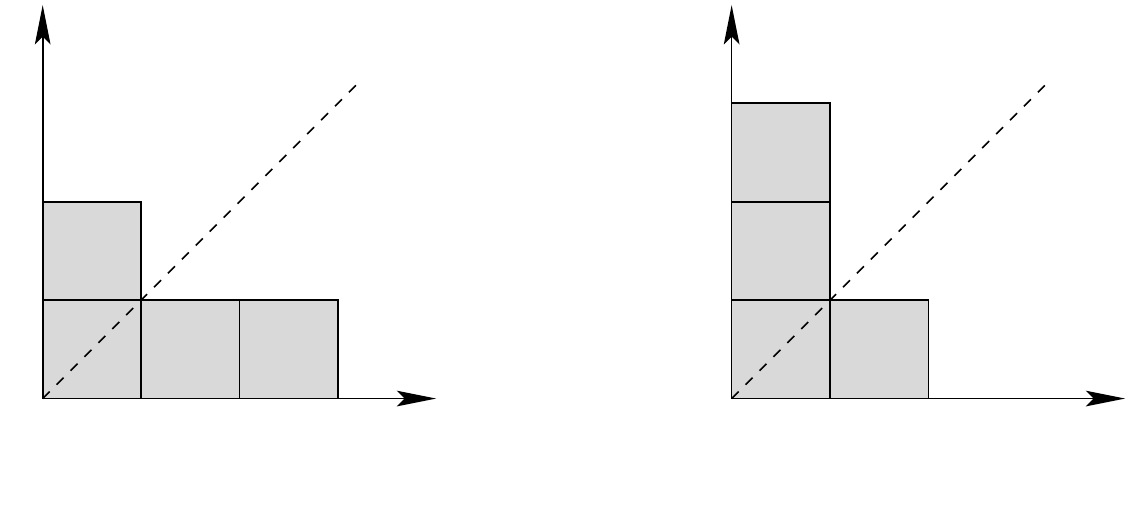_t}
\caption{A partition and its symmetric partition.}
\label{symmetric}
\end{center}
\end{figure}

Let $p_x: \R^2 \to \R$, $(x,y) \mapsto x$ be the projection on the $x$-axis and $p_y: \R^2 \to \R$, $(x,y) \mapsto y$ the projection on the $y$-axis. 

\begin{Remark}
\begin{enumerate}[(1)]
\item
Consider $\pi =(n_1, \dots, n_k) \in \msP_1(n)$ with associated measure $\de_\pi=\sum_{i=1}^k \sum_{\al=1}^{n_i} \de_{\xi_{i \al}}$. Then $sym(\pi)$ has the associated measure $\de_{sym(\pi)}=\sum_{i=1}^k \sum_{\al=1}^{n_i} \de_{T(\xi_{i \al })}=\sum_{i=1}^k \sum_{\al=1}^{n_i} \de_{\xi_{\al i}}$.
\item
The push forward measures $p_y(\msL_\pi)$ resp. $p_y(\de_\pi)$ have the same density functions as $p_x(\msL_{sym(\pi)})$ resp. $p_x(\de_{sym(\pi)})$.
\end{enumerate}
\end{Remark}

Now we investigate the relation between a partition and its symmetric partition by means of optimal transport. We are looking for a function which realizes minimal transport costs in Monge's problem for the partitions $\pi$ and $sym(\pi)$, more precisely for the measures $\mu^-=\de_\pi$ and $\mu^+=\de_{sym(\pi)}$.

\begin{Theorem}
\label{reflectionoptimal}
Let $\pi \in \msP_1(n)$ and let $c$ be the Euclidean distance. Then $f \in \msF(\de_\pi, \de_{sym(\pi)})$ given by $f= \Id$ on $\spt(\de_\pi) \cap \spt(\de_{sym(\pi)})$ and $f=T$ elsewhere minimizes $C(\de_\pi, \de_{sym(\pi)})$. 
\end{Theorem}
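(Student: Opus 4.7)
The plan is to invoke the $c$-cyclic monotonicity criterion of Theorem~\ref{ccyclicoptimal}, after using Proposition~\ref{idonspt} to peel off the trivial identity part of the map. Write $I := \spt(\de_\pi) \cap \spt(\de_{sym(\pi)})$, $A := \spt(\de_\pi)\setminus I$, and $B := \spt(\de_{sym(\pi)})\setminus I$. The Euclidean distance is metric-like, so Proposition~\ref{idonspt} permits choosing an optimal $f$ with $f|_I = \Id$. Because $T$ is an involutive isometry swapping $\spt(\de_\pi)$ with $\spt(\de_{sym(\pi)})$, it preserves $I$ setwise and hence restricts to a bijection $T\colon A \to B$. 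Bijectivity of any such optimal $f$ then forces $f(A) = B$, so the theorem reduces to showing that among all bijections $\phi\colon A \to B$, the particular choice $\phi = T|_A$ minimizes $\sum_{z\in A}|z - \phi(z)|$.

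By Theorem~\ref{ccyclicoptimal}, this is equivalent to $\graph(T|_A)$ being $c$-cyclic monotone. Since every permutation is a product of transpositions, I first verify the two-point case: for all $z_1, z_2 \in A$,
\begin{equation*}
|z_1 - T z_1| + |z_2 - T z_2| \le |z_1 - T z_2| + |z_2 - T z_1|.
\end{equation*}
Because $T$ is an involutive isometry, $|z_1 - T z_2| = |T z_1 - z_2|$, so the right-hand side equals $2|z_2 - T z_1|$. Writing $z_j = (x_j, y_j)$, the inequality becomes
\begin{equation*}
\sqrt{2}\bigl(|x_1 - y_1| + |x_2 - y_2|\bigr) \le 2\sqrt{(x_2 - y_1)^2 + (y_2 - x_1)^2}.
\end{equation*}
Squaring both sides and introducing $u := x_2 - x_1$ and $v := y_2 - y_1$, a direct expansion shows that whenever $z_1$ and $z_2$ lie in the same closed half-plane with respect to the diagonal $\{x = y\}$, the inequality collapses to $(u+v)^2 \ge 0$ and is automatic.

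The main obstacle I expect is the case where $z_1$ and $z_2$ straddle the diagonal: there the same algebra produces an uncontrolled cross term, and the bare pairwise inequality can fail for general finite subsets of $\R^2$. To close the argument I would try to establish the structural lemma that, because $\pi$ is a partition (so the sequence $(n_i)$ is monotone decreasing), the set $A$ is actually contained in a single closed half-plane, either $\{y \ge x\}$ or $\{y \le x\}$. Granting this, only the favourable same-side case ever arises, pairwise $c$-cyclic monotonicity of $\graph(T|_A)$ follows and extends by decomposition into transpositions to all permutations, and Theorem~\ref{ccyclicoptimal} then yields the optimality of $f$. Proving (or, if it has to be weakened, suitably replacing) this half-plane lemma is the delicate combinatorial step I anticipate to be the crux of the argument; if it should fail for a given $\pi$, one would need to supplement the transposition reduction with a genuinely multi-point rearrangement argument, again exploiting the monotone decreasing structure of $(n_i)$.
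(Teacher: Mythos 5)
Your strategy coincides with the paper's: fix the intersection via Proposition~\ref{idonspt} and then try to verify $c$-cyclic monotonicity of $\graph(T|_A)$ so that Theorem~\ref{ccyclicoptimal} applies. But the step you correctly single out as the crux is exactly where the argument collapses, and it cannot be repaired: the half-plane lemma is \emph{false}, and with it the theorem itself. Take $\pi=(5,2,2)$, so $sym(\pi)=(3,3,1,1,1)$. Then $A=\spt(\de_\pi)\setminus\spt(\de_{sym(\pi)})$ consists of the three centers $(\tfrac12,\tfrac72)$, $(\tfrac12,\tfrac92)$, $(\tfrac52,\tfrac32)$, which lie on \emph{both} sides of the diagonal $\{x=y\}$. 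For $z_1=(\tfrac52,\tfrac32)$ and $z_2=(\tfrac12,\tfrac72)$ the two-point inequality fails outright: $|z_1-Tz_1|+|z_2-Tz_2|=\sqrt2+3\sqrt2=4\sqrt2$, whereas $|z_1-Tz_2|+|z_2-Tz_1|=\sqrt2+\sqrt2=2\sqrt2$. Hence $\graph(T|_A)$ is not $c$-cyclic monotone, and indeed the map $f$ of the theorem is not optimal: keeping the identity on the intersection and $(\tfrac12,\tfrac92)\mapsto(\tfrac92,\tfrac12)$, but sending $(\tfrac12,\tfrac72)\mapsto(\tfrac32,\tfrac52)$ and $(\tfrac52,\tfrac32)\mapsto(\tfrac72,\tfrac12)$, gives total cost $6\sqrt2$ versus $8\sqrt2$ for $f$. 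So no ``genuinely multi-point rearrangement argument'' can close the gap --- the statement fails for this $\pi$. The paper's own one-line justification (``none of the segments between $z$ and $T(z)$ cross each other'') has the same hole: in this example the segments $[z_1,Tz_1]$ and $[z_2,Tz_2]$ both lie on the anti-diagonal $x+y=4$, one nested inside the other and traversed in the opposite direction, which is precisely the configuration a metric cost penalizes.

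Two further remarks on your write-up. First, even in the favourable same-side case, reducing a general permutation to transpositions is not legitimate: after one swap the intermediate pairs $(x_{\tau(i)},y_i)$ no longer belong to $\graph(T|_A)$, so the two-point inequality cannot be iterated; pairwise monotonicity is strictly weaker than $c$-cyclic monotonicity and a separate argument (e.g.\ via a $1$-Lipschitz Kantorovich potential such as $h(x,y)=(x-y)/\sqrt2$ when $A\subseteq\{x\ge y\}$) is needed. Second, your same-side computation is correct --- the difference of squares does reduce to $2(u+v)^2\ge0$ --- so a corrected statement would have to restrict to those $\pi$ for which $A$ lies in a single closed half-plane, which, as the example shows, is a genuine restriction on $\pi$ and not a consequence of monotonicity of $(n_i)$.
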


\begin{figure}[h]
\begin{center} 
\input{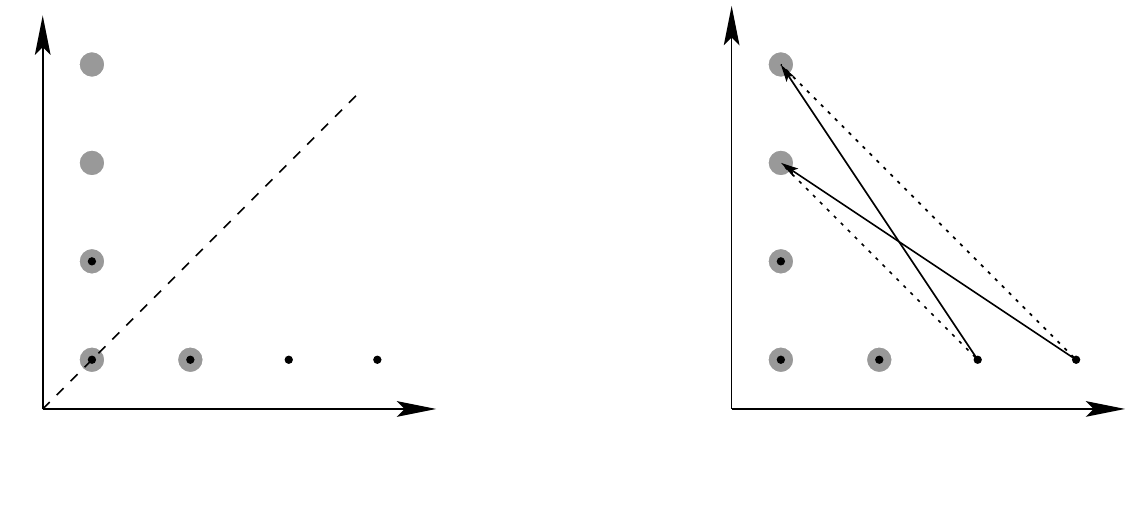_t}
\caption{(a) $\spt(\de_\pi)$ is displayed by small black points and $\spt(\de_{sym(\pi)})$ by larger greyish points. (b) $f=T$ on $\spt(\de_\pi) \cap \spt(\de_{sym(\pi)})$ due to $c$-cyclicity.}
\label{symmetricproof}
\end{center}
\end{figure}

\begin{proof}
Due to \refidonspt, we can set $f=\Id$ on $\spt(\de_\pi) \cap \spt(\de_{sym(\pi)})$. Now consider $z \in \spt(\pi) \setminus \spt(sym(\pi))$. 
If we consider the graph of $T$, we observe that it is $c$-cyclic monotone since none of the segments between $z$ and $T(z)$ cross each other, cf.\ Figure \ref{symmetricproof}. Thus \refccyclicoptimal\ implies the claim.
\end{proof}

\begin{Corollary}
\label{selfsymmetricnull}
$\pi \in \msP_1(n)$ is self-symmetric if and only if $C(\de_\pi, \de_{sym(\pi)})=0$ with the Euclidean distance as cost function.
\end{Corollary}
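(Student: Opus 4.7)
The plan is to read this off almost directly from \refreflectionoptimal. By that theorem an optimal $f \in \msF(\de_\pi, \de_{sym(\pi)})$ is the identity on the intersection $\spt(\de_\pi) \cap \spt(\de_{sym(\pi)})$ and equals $T$ on the complement $\spt(\de_\pi) \setminus \spt(\de_{sym(\pi)})$. Since the cost $c(z, f(z))$ vanishes on the intersection (where $f(z)=z$), the transport cost reduces to
\beqs
C(\de_\pi, \de_{sym(\pi)}) \;=\; \sum_{z \in \spt(\de_\pi) \setminus \spt(\de_{sym(\pi)})} c(z, T(z)).
\eeqs

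The key observation I would then establish is that every summand on the right is strictly positive. Indeed, if $z \in \spt(\de_\pi) \setminus \spt(\de_{sym(\pi)})$, then $z$ cannot lie on the diagonal $\{x=y\}$: if it did, then $T(z)=z$ would force $z \in T(\spt(\de_\pi)) = \spt(\de_{sym(\pi)})$, contradicting the choice of $z$. So $T(z) \neq z$ and the Euclidean distance $c(z, T(z))$ is strictly positive. Consequently $C(\de_\pi, \de_{sym(\pi)}) = 0$ if and only if $\spt(\de_\pi) \setminus \spt(\de_{sym(\pi)}) = \emptyset$, i.e.\ $\spt(\de_\pi) \subseteq \spt(\de_{sym(\pi)})$.

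Finally I would translate the support condition into self-symmetry. Both $\de_\pi$ and $\de_{sym(\pi)}$ are sums of $n$ unit point masses at $n$ distinct points, so $\abs{\spt(\de_\pi)} = n = \abs{\spt(\de_{sym(\pi)})}$; combined with the inclusion above this forces $\spt(\de_\pi) = \spt(\de_{sym(\pi)})$, hence $\de_\pi = \de_{sym(\pi)}$, which by the correspondence between partitions and their associated point measures (\refeqivstatements) means $\pi = sym(\pi)$. The converse direction is immediate: if $\pi$ is self-symmetric then $\de_\pi = \de_{sym(\pi)}$, the identity map lies in $\msF(\de_\pi, \de_{sym(\pi)})$ with zero cost, so $C(\de_\pi, \de_{sym(\pi)}) = 0$.

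There is no real obstacle; the work has already been done in \refreflectionoptimal, and the only thing requiring a brief argument is the \emph{strict} positivity of $c(z,T(z))$ for off-diagonal $z$, which is what rules out a vanishing cost in the non-self-symmetric case.
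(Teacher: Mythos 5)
Your proposal is correct and follows exactly the route the paper intends: the corollary is stated as an immediate consequence of \refreflectionoptimal, and you read the cost off the optimal map given there. The one detail you supply that the paper leaves implicit --- that any $z \in \spt(\de_\pi) \setminus \spt(\de_{sym(\pi)})$ must lie off the diagonal, so $c(z,T(z))>0$ --- is exactly the right observation to make the ``only if'' direction airtight.
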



\subsection{Generalized symmetric partitions in $\msP_k(n)$}

Symmetric partitions have a natural generalization to higher dimensions. For $m \in \N$, denote by $Perm(m)$ the permutation group of the set $\{1, \dots, m\}$. Given $\si \in Perm(m)$, we write $\si(1, \dots, m)=(\si(1), \dots, \si(m))$.

\begin{Definition}
\label{sigmasymmetric}
Let $\si \in Perm(k+1)$ and $T_\si: \R^{k+1} \to \R^{k+1}$ be linear with $T_\si=(e_{\si(1)}, \dots, e_{\si(k+1)})$ as matrix w.r.t. the standard basis $e_1$, \dots, $e_{k+1} $ of $\R^{k+1}$.
The $\si${\textbf {\textit -symmetric partition}} of $\pi \in \msP_k(n)$ is the partition $sym_\si(\pi)$ with the Young tableau $Y(sym_\si(\pi))=T_\si(Y(\pi))$. Partitions $\pi \in \msP_k(n)$ with $\pi=sym_\si(\pi)$ are called $\si${\textbf {\textit -selfsymmetric}}.
\end{Definition}


We believe that $\si$-symmetric partitions behave similar as symmetric partitions:

\begin{Conjecture}
 \begin{enumerate}[1)]
\item
The map $T_\si$ induces an optimal transport map for $\pi$ and $sym_\si(\pi)$.
\item
$\pi \in \msP_k(n)$ is $\si$-selfsymmetric if and only if $C(\de_\pi, \de_{sym_\si(\pi)})=0$ where $c$ is a metric-like cost functions.
\end{enumerate}
\end{Conjecture}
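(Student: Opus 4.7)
The plan is to mirror the proofs of \refreflectionoptimal\ and \refselfsymmetricnull, with the main new ingredient being a $c$-cyclic monotonicity argument for the higher-dimensional permutation map $T_\si$ in place of the planar reflection $T$. I would handle Part 2 first since it is purely algebraic and independent of Part 1. If $\pi = sym_\si(\pi)$, then $\de_\pi = \de_{sym_\si(\pi)}$, so $\Id$ lies in $\msF(\pi, sym_\si(\pi))$ with total cost zero and hence $C(\de_\pi, \de_{sym_\si(\pi)}) = 0$. Conversely, if $C(\de_\pi, \de_{sym_\si(\pi)}) = 0$, then because $\msF(\pi, sym_\si(\pi))$ is finite and $c \geq 0$, some optimal $f$ realizes $\sum_{z \in \spt(\de_\pi)} c(z, f(z)) = 0$; the defining property $c(x,y) = 0 \iff x = y$ of a metric-like cost forces $f = \Id$ on $\spt(\de_\pi)$, which forces the two supports to agree and hence $\de_{sym_\si(\pi)} = f(\de_\pi) = \de_\pi$, giving $\pi = sym_\si(\pi)$.

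For Part 1, I would produce the candidate transport map $f \in \msF(\pi, sym_\si(\pi))$ by declaring $f = \Id$ on $\spt(\de_\pi) \cap \spt(\de_{sym_\si(\pi)})$ and $f = T_\si$ elsewhere; this is well defined because $T_\si$ bijects $\spt(\de_\pi)$ onto $\spt(\de_{sym_\si(\pi)})$ by construction, and the choice on the intersection is legitimized by \refidonspt. To establish optimality I would invoke \refccyclicoptimal\ and reduce the claim to showing that $\graph(f)$ is $c$-cyclic monotone. On the identity piece this is trivial since every summand vanishes, so after discarding the fixed points the task reduces to verifying that for every $z_1, \dots, z_\ell \in \spt(\de_\pi) \setminus \spt(\de_{sym_\si(\pi)})$ and every $\tau \in Perm(\ell)$ one has
\[ \sum_{i=1}^\ell c(z_i, T_\si z_i) \leq \sum_{i=1}^\ell c(z_{\tau(i)}, T_\si z_i). \]

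The main obstacle is precisely this cyclic-monotonicity estimate. When $\si$ is an involution, i.e., a product of disjoint transpositions, $T_\si$ is an orthogonal reflection through the fixed subspace of its matrix, and I would try to extend the 2D "non-crossing" argument from the proof of \refreflectionoptimal\ by projecting the configuration onto the 2-planes spanned by each transposed coordinate pair and summing the resulting planar estimates; the key point is that the distance from $z$ to $T_\si z$ decomposes orthogonally over the transposed pairs. For a general $\si$ containing cycles of length greater than two, $T_\si$ is no longer an involution and the naive non-crossing geometry breaks down; in that regime I would attempt an induction on the cycle decomposition of $\si$, or alternatively build explicit $c$-concave potentials $(h_-, h_+)$ that certify optimality via the dual formulation \eqref{dual}. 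A secondary question I would want to settle along the way is for which metric-like cost functions beyond the Euclidean distance, which is the only cost treated in the 2D proof, the statement actually holds; it may turn out that the conjecture is cleanest for strictly concave radial costs $c(x,y) = f(|x-y|)$, since these are the ones for which $T_\si$ never "moves mass back through itself".
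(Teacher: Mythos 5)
First, note that the paper offers no proof of this statement: it is explicitly labelled a Conjecture, so there is no in-paper argument to compare yours against, and the question is only whether your proposal actually closes it. Your treatment of Part 2 does: the forward direction is immediate since $\pi=sym_\si(\pi)$ makes the identity an admissible zero-cost map, and the converse correctly uses the finiteness of $\msF(\pi,sym_\si(\pi))$ together with the axiom $c(x,y)=0\Leftrightarrow x=y$ to force an optimal $f$ to be the identity on $\spt(\de_\pi)$, hence equality of supports, of measures, and of partitions. This is exactly the argument that yields \refselfsymmetricnull\ from \refidonspt, and it is complete.

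Part 1, however, remains unproven, for two reasons. The first you do not notice: your candidate map ($f=\Id$ on $I:=\spt(\de_\pi)\cap\spt(\de_{sym_\si(\pi)})$ and $f=T_\si$ elsewhere) need not be well defined. For $f$ to be a bijection onto $\spt(\de_{sym_\si(\pi)})$ one needs $T_\si(I)=I$, so that $T_\si$ carries $\spt(\de_\pi)\setminus I$ onto $\spt(\de_{sym_\si(\pi)})\setminus I$. In \refreflectionoptimal\ this holds because the conjugation $T$ is an involution, but for $\si$ containing a cycle of length at least three one only knows $T_\si(\spt(\de_{sym_\si(\pi)}))=\spt(\de_{sym_{\si^2}(\pi)})$, so $T_\si$ may send a point of $\spt(\de_\pi)\setminus I$ into $I$ and collide with a point you have already fixed. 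The second gap you acknowledge yourself: the $c$-cyclic monotonicity of $\graph(f)$, which by \refccyclicoptimal\ is the entire content of the optimality claim, is only announced as ``the main obstacle'' together with a menu of strategies (projection onto transposed $2$-planes, induction on the cycle decomposition, dual potentials), none of which is carried out. Even the most concrete of these, the orthogonal decomposition for involutive $\si$ with the Euclidean cost, does not reduce to the planar case by ``summing the resulting planar estimates'': the Euclidean length of $z-T_\si z$ is the square root of the sum of the squared lengths of its projections, not their sum, so the planar non-crossing inequalities do not simply add. Your observation that the conjecture does not even fix the class of admissible cost functions is well taken; as written, the proposal settles Part 2 but leaves Part 1 exactly where the paper leaves it, namely open.
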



\subsection{The Euler identity}
\label{subsectioneuler}

When studying subsets of $\msP_1(n)$, Euler proved the identity
\begin{equation*}
p_1(n \mid all\ n_i\ odd) 
= p_1(n \mid all\ n_i\ mutually\ distinct).
\end{equation*}

We investigate if these subsets can be characterized by means of optimal transport. For $r \in \R$, set $\lfloor r \rfloor:= \max \{l \in \Z \mid l \leq r\}$.
Let $\pi=(n_1, \dots, n_k) \in \msP_1(n)$ and introduce
\beqs
\dehat_\pi:= \sum_{i=1}^k \sum_{\al=1}^{n_i} \de_{(i, \lfloor -\frac{n_i}{2} \rfloor +\al)}
\eeqs
which displays $\pi$ centered on the $x$-axis, see Figure \ref{centermeasure}.
\begin{figure}[h]
\begin{center} 
\input{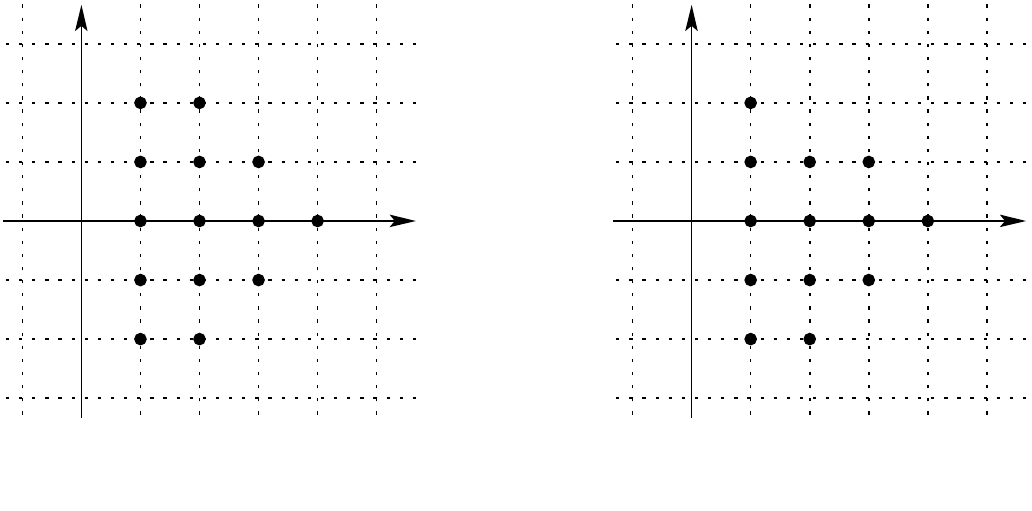_t}
\caption{(a) The support of $\dehat_\pi$ for $\pi =(5,5, 3,1) $. (b) The support of $\dehat_{\pi}$ for $\pi=(5,4,3,1)$.}
\label{centermeasure}
\end{center}
\end{figure}
We obtain

\begin{Proposition}
\label{allodd}
Let $T: \R^2 \to \R^2$, $(x,y) \mapsto (x, -y)$ be the reflection on the $x$-axis and let $c$ be a metric-like cost function. Then
\begin{enumerate}[(1)]
 \item 
$\pi \in \msP_1(n \mid all\ n_i\ odd)$ if and only if $T(\dehat_\pi)=\dehat_\pi$.
\item
$\pi \in \msP_1(n \mid all\ n_i\ odd)$ if and only if $C(\dehat_\pi, T(\dehat_\pi))=0$.
\end{enumerate}
\end{Proposition}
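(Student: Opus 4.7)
The approach is to reduce the statement to a column-by-column computation, since $T$ preserves the $x$-coordinate and $\dehat_\pi$ decomposes as a sum over columns $i = 1, \dots, k$.

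First I would analyze the vertical fiber of $\spt(\dehat_\pi)$ above each $x = i$. The fiber consists of the points $(i, \lfloor -n_i/2\rfloor + \al)$ for $\al = 1, \dots, n_i$. I would split into two cases: if $n_i = 2m+1$ is odd, then $\lfloor -n_i/2\rfloor = -m-1$, so the fiber has $y$-coordinates $\{-m, -m+1, \dots, m-1, m\}$, which is symmetric under $y \mapsto -y$. If $n_i = 2m$ is even, then $\lfloor -n_i/2\rfloor = -m$, so the $y$-coordinates are $\{-m+1, \dots, m\}$, whose image under $T$ is $\{-m, \dots, m-1\}$ — these two sets differ (one contains $m$ but not $-m$, the other the reverse), so $T$ does not preserve this fiber. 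Combining the two cases, $T(\dehat_\pi) = \dehat_\pi$ if and only if every $n_i$ is odd, which gives (1).

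For (2), the forward direction is immediate from (1): if all $n_i$ are odd, then $\dehat_\pi = T(\dehat_\pi)$, so the identity lies in $\msF(\dehat_\pi, T(\dehat_\pi))$ and realises cost $0$. For the converse, suppose $C(\dehat_\pi, T(\dehat_\pi)) = 0$. Since the cost function is metric-like, we have $c \geq 0$ and $c(z, w) = 0$ only for $z = w$. Any optimal $f \in \msF(\dehat_\pi, T(\dehat_\pi))$ with zero total cost must therefore satisfy $f(z) = z$ for every $z \in \spt(\dehat_\pi)$. Since $f$ is a bijection $\spt(\dehat_\pi) \to \spt(T(\dehat_\pi))$, this forces $\spt(\dehat_\pi) = \spt(T(\dehat_\pi))$, and as all point masses equal $1$ we conclude $\dehat_\pi = T(\dehat_\pi)$. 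Applying (1) then yields that all $n_i$ are odd.

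The only real computational step is the floor calculation in each parity case; the rest is bookkeeping plus the standard observation that for metric-like cost functions, zero transport cost forces the optimal map to be the identity on the support.
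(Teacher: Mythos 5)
Your proof is correct and follows essentially the same route as the paper: part (1) is a direct verification from the definition of $\dehat_\pi$ (which you spell out via the floor computation in the two parity cases, a detail the paper omits), and part (2) reduces to the observation that for a metric-like cost a zero-cost transport forces equality of supports, which is exactly the content the paper extracts from Proposition \ref{idonspt}. No gaps.
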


\begin{proof}
(1) follows from the definition of $\dehat_\pi$. (2) follows from \refidonspt\ since $\spt(\dehat_\pi)=\spt(T(\dehat_\pi))$ if and only if $\pi \in \msP_1(n \mid all\ n_i\ odd)$.
\end{proof}

Let $\pi=(n_1, \dots, n_k) \in \msP_1(n)$ and $\si \in Perm(k)$. We define 
\begin{equation*}
\de_\pi^\si:=\sum_{i=1}^k \sum_{\al=1}^{n_i} \de_{(\si(i), \al)}.
\end{equation*}
For $\si=\Id$, we recover $\de_\pi^{\Id} = \de_\pi$.

\begin{Proposition}
\label{alldistinct}
Let $c$ be a metric-like cost function. Then
\begin{enumerate}[(1)]
 \item 
$\pi \in \msP_1(n \mid not\ all\ n_i\ mutually\ distinct)$. 

$\IFF$ There is $\si \in Perm(k(\pi))\setminus \{\Id\}$ with $\de_\pi= \de_\pi^\si$. 

$\IFF$ There is $\si \in Perm(k(\pi))\setminus \{\Id\}$ with $C(\de_\pi, \de_\pi^\si)=0$.

\item

$\pi \in \msP_1(n \mid all\ n_i\ mutually\ distinct)$.

$\IFF$ For all $\si \in Perm(k(\pi))\setminus \{\Id\}$ holds $\de_\pi \neq \de_\pi^\si$.

$\IFF$ For all $\si \in Perm(k(\pi))\setminus \{\Id\}$ holds $C(\de_\pi, \de_\pi^\si)\neq 0$.
\end{enumerate}
\end{Proposition}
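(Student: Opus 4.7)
Plan. Statement (2) is the logical negation of (1) (each of its three assertions is the negation of the corresponding assertion in (1)), so it suffices to prove (1). The proof is a chain of implications connecting the three characterizations; it is largely book-keeping, and the main points to verify are that the point measures $\de_\pi$ and $\de_\pi^\si$ are determined by their supports, and that $c$-metric-likeness translates vanishing transport cost into pointwise coincidence.

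First I would prove the equivalence between the combinatorial condition and the measure-theoretic one. For $(\Leftarrow)$, assume $\de_\pi=\de_\pi^\si$ for some $\si\neq\Id$; pick $i$ with $\si(i)\neq i$. Projecting both sides on the $x$-axis, the height of the column over $\si(i)$ on the left is $n_{\si(i)}$, while on the right the column over $\si(i)$ comes from the index $i$ and has height $n_i$; hence $n_i=n_{\si(i)}$ with $i\neq\si(i)$, so the parts of $\pi$ are not pairwise distinct. For $(\Rightarrow)$, if $n_i=n_j$ with $i\neq j$ take $\si=(i\,j)$; then $\sum_{\al=1}^{n_i}\de_{(\si(i),\al)}+\sum_{\al=1}^{n_j}\de_{(\si(j),\al)}=\sum_{\al=1}^{n_j}\de_{(j,\al)}+\sum_{\al=1}^{n_i}\de_{(i,\al)}$ and all other summands are unchanged, so $\de_\pi^\si=\de_\pi$.

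Next I would handle the equivalence with the transport-cost condition. The direction $\de_\pi=\de_\pi^\si\Rightarrow C(\de_\pi,\de_\pi^\si)=0$ is immediate: the identity map lies in $\msF(\de_\pi,\de_\pi^\si)$ and $c(z,z)=0$ for all $z$, so the transport cost is zero. For the converse, suppose $C(\de_\pi,\de_\pi^\si)=0$ and pick an optimal $f\in\msF(\de_\pi,\de_\pi^\si)$ (which exists, as $\msF$ is finite and nonempty). Then $\sum_{z\in\spt(\de_\pi)}c(z,f(z))=0$, and metric-likeness gives $c(z,f(z))=0$, hence $f(z)=z$, for every $z\in\spt(\de_\pi)$. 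Since $f$ is a bijection onto $\spt(\de_\pi^\si)$, this forces $\spt(\de_\pi)=\spt(\de_\pi^\si)$. Because the points $(i,\al)$ with $1\leq i\leq k$, $1\leq\al\leq n_i$ are pairwise distinct (and likewise after applying the bijection $\si$ to the first coordinate), both $\de_\pi$ and $\de_\pi^\si$ are multiplicity-one sums of Dirac masses and are therefore determined by their supports; hence $\de_\pi=\de_\pi^\si$.

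The only step that requires any care is the last one, and the hard part there is simply checking that the Dirac masses in the definition of $\de_\pi^\si$ are still all distinct (so that the support alone determines the measure); this follows because $\si$ is a bijection on $\{1,\dots,k\}$, so the pairs $(\si(i),\al)$ for $1\leq i\leq k$, $1\leq\al\leq n_i$ are distinct whenever the pairs $(i,\al)$ are. Chaining the three equivalences yields (1), and negating gives (2).
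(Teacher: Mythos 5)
Your proof is correct and follows essentially the same route as the paper: the same chain of equivalences (non-distinct parts $\IFF$ existence of a nontrivial $\si$ with $\de_\pi=\de_\pi^\si$ via a transposition, and $\de_\pi=\de_\pi^\si \IFF C(\de_\pi,\de_\pi^\si)=0$ via positivity of the metric-like cost), with (2) obtained by negation. You merely spell out details the paper leaves implicit, such as why zero cost forces equality of the multiplicity-one point measures.
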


\begin{proof}
(1) $\pi \in \msP_1(n \mid not\ all\ n_i\ mutually\ distinct)$ if and only if there is $1 \leq i_1 \leq i_2 \leq k(\pi)$ with $n_{i_1}=n_{i_2}$ if and only if there is $\si \in Perm(k(\pi))$, namely the permutation exchanging $i_1$ and $i_2$ and leaving the other indices fixed, with $\de_\pi= \de_\pi^\si$ if and only if $C(\de_\pi, \de_\pi^\si)=0$ due to \refidonspt.

(2) is the negation of the first item.
\end{proof}

Now we want to see how we can characterize Euler's identity \eqref{euleridentity} by means of optimal transport. Andrews $\&$ Eriksson \cite{andrews-eriksson} give an algorithm in order to show how a partition in $\msP_1(n \mid all\ n_i\ mutually\ distinct)$ is transformed into a partition in $\msP_1(n \mid all\ n_i\ odd)$ and back. We will write this algorithm as an explicit bijection, but first we need some notation. Recall that each natural number $m \in \N$ has a unique decomposition into prime factors 
\beqs
m=\prod_{p \ prime} p^{\lam(m,p)}= 2^{\lam(m,2)} \prod_{2\ <\ p \ prime} p^{\lam(m,p)}.
\eeqs
This decomposition into even and odd part suggests the definition of the functions
\begin{align*}
& g: \N \to \N, \qquad g(m):= 2^{\lam(m,2)}, \\ 
& u: \N \to \N, \qquad u(m):= \prod_{2 \ < \ p \ prime} p^{\lam(m,p)}.
\end{align*}
Note that, for $\pi=(n_1, \dots, n_{k(\pi)}) \in \msP(n)$, the number $k(\pi)$ is the number of `columns' $n_i$ of $(n_1, \dots, n_{k(\pi)})$. We introduce
\beqs
\msP_1^{perm}:=\{ \si(\pi):= (n_{\si(1)}, \dots, n_{\si(k)}) \mid \pi=(n_1, \dots, n_k) \in \msP_1(n),\ \si \in Perm(k)\}
\eeqs
which are `generalized partitions' since the $n_i$ are {\em not necessarily} monotone decreasing. 
We set
\beqs
\phi \ : \  \msP_1(n \mid all\ n_i\ mutually\ distinct) \quad  \longrightarrow \quad \msP_1^{perm}(n \mid all\ n_i\ odd)
\eeqs
given by
\begin{align*}
\begin{array}{rccc}
\phi(\pi)=\phi(n_1, \dots, n_k) := & \Bigl(\underbrace{u(n_1), \dots, u(n_1)}, & \dots & ,\underbrace{u(n_k), \dots, u(n_k)} \Bigr). \\
& g(n_1) && g(n_k)
\end{array}
\end{align*}
$\phi(\pi)$ is only a `generalized partition' since its entries $u(n_1), \dots, u(n_k)$ are not necessarily monotonically decreasing. Ordering them monotonically decreasing leads to a partition which we call $\bar{\phi}(\pi)$. It lies by construction in $\msP_1(n \mid all\ n_i\ odd)$. Andrews $\&$ Eriksson \cite{andrews-eriksson} show that this construction induces a bijection
\beqs
\bar{\phi} \ : \ \msP_1(n \mid all\ n_i\ mutually\ distinct) \quad \longrightarrow \quad \msP_1(n \mid all\ n_i\ odd).
\eeqs
They also give the inverse construction, but we do not need it here.

\begin{Theorem}
\label{eulercost}
Abbreviate $\msP_1(n \mid all\ n_i\ mutually\ distinct)=: \mcD$ and $\msP_1^{perm}(n \mid all\ n_i\ odd)=: \mcO$ and denote by $\msF(\mcD, \mcO)$ the space of maps from $\mcD$ to $\mcO$. Let $c$ be a metric-like cost function.
Then there is a cost function $\msC: \mcD \x \mcO \to \R_+$ such that $\msC(\pi, \phi(\pi)) = \min\{ C(\pi, \psi(\pi)) \mid \psi \in \msF(\mcD,\mcO) \}$. 
\end{Theorem}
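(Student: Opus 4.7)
My plan is to exploit the considerable freedom in the statement: the cost function $\msC$ is not constrained to arise canonically from $c$, only to make $\phi(\pi)$ the optimal target in $\mcO$ for each source $\pi \in \mcD$, that is, $\msC(\pi, \phi(\pi))$ should equal $\min\{\msC(\pi, \psi(\pi)) \mid \psi \in \msF(\mcD, \mcO)\}$, so $\phi(\pi)$ realises the slice-minimum of $\msC(\pi,\cdot)$ over $\mcO$.

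The construction I would use recycles the Monge cost from \refsectionintegerpartitions. For each generalized partition $\tau \in \mcO$, the recipe $\de_\tau = \sum_{i,\al} \de_{\xi_{i\al}}$ defines a point measure of total mass $n$ on $\R^2$ whose support is an $n$-element subset of $\N \x \N$. I then set
\beqs
\msC(\pi, \tau) := C(\de_{\phi(\pi)}, \de_\tau),
\eeqs
where $C$ is the Monge cost built from the given metric-like $c$, as studied in Section 3. Since both measures have total mass $n$, the space $\msF(\de_{\phi(\pi)}, \de_\tau)$ of support-bijections is non-empty and finite, so the infimum is attained and $\msC : \mcD \x \mcO \to \R_+$ is well-defined and non-negative.

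To verify optimality I would check two boundary cases. First, $\msC(\pi, \phi(\pi)) = 0$: taking the identity bijection on $\spt(\de_{\phi(\pi)})$ yields $\sum_z c(z,z) = 0$ by property (a) of metric-likeness. Second, $\msC(\pi, \tau) > 0$ whenever $\tau \neq \phi(\pi)$ in $\mcO$: distinct ordered tuples in $\msP_1^{perm}(n \mid \mbox{all } n_i \mbox{ odd})$ produce distinct supports in $\N \x \N$, so $\spt(\de_{\phi(\pi)}) \setminus \spt(\de_\tau) \neq \emptyset$; any bijection must then send some $z$ in this difference to a point in $\spt(\de_\tau)$, which is different from $z$ itself, incurring strictly positive cost because $c$ is metric-like. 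Hence $\phi(\pi)$ is the unique minimiser of $\msC(\pi, \cdot)$ over $\mcO$, and the required identity $\msC(\pi, \phi(\pi)) = 0 = \min\{\msC(\pi, \psi(\pi)) \mid \psi \in \msF(\mcD, \mcO)\}$ follows.

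The main difficulty here is conceptual rather than technical: the construction above is essentially a reverse-engineering trick that centers $\msC$ at $\phi(\pi)$, and does not explain \emph{why} the Andrews--Eriksson bijection should be optimal from first principles. A more satisfying approach, for instance a step-cost on a graph of elementary moves ``replace $2m$ by $m+m$'' connecting $\mcD$ to $\mcO$, with $\msC(\pi, \tau)$ the minimum total weight of a connecting path, would extract genuine optimality from $\phi$; but the theorem as stated only demands the bare existence of some admissible $\msC$, which the construction above already furnishes.
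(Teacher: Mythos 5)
Your proposal is correct and follows essentially the same strategy as the paper: both reverse-engineer a cost $\msC$ that vanishes at $(\pi,\phi(\pi))$ and is non-negative elsewhere, so that $\phi$ trivially realises the minimum. The paper assembles $\msC$ as a column-by-column sum of Monge costs (each minimised over placements of the image column inside $\psi(\pi)$), whereas you use the single global Monge cost $C(\de_{\phi(\pi)}, \de_\tau)$; the logical content --- and the artificiality you yourself flag --- is the same, and your variant even gives strict positivity away from $\phi(\pi)$, hence uniqueness of the minimiser.
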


\begin{proof}
The idea is to construct $\msC$ in such a way that $\msC(\pi, \phi(\pi))=0$ and $\msC(\pi, \psi(\pi))>0$ for $\psi \neq \phi$. We note $k(\phi(\pi)) \geq k(\pi)$. 
Given $\pi =(n_1, \dots, n_{k(\pi)}) \in \mcD$ we write (abusing the notation)
\beqs
\begin{array}{rc}
\phi(n_i) := & (\underbrace{u(n_i), \dots, u(n_i)})\\
& g(n_i)
\end{array}
\eeqs
and we have $k(\phi(n_i))=1$ if and only if $g(n_i)=2^0=1$.
Moreover, for $i \in \N$, define the entry functions $i: \msP_1(n) \to \N$, $i(\pi)=i(n_1, \dots, n_{k(\pi)}):=n_i$ if $1 \leq i \leq k(\pi)$ and $0$ otherwise. 
For $\pi \in \msP_1(n)$ define the `column measure'
\beqs
\de_{i(\pi)}:=\sum_{\al=1}^{i(\pi)}\de_{(i,\al)}
\eeqs
and the `odd-even prime decomposition measure' of an entry $i(\pi)$
\beqs
\de_{gu(i(\pi))}:= \sum_{j_1=1 }^{g(i(\pi))} \sum_{j_2=1}^{u(i(\pi))} \de_{(j_1, j_2)}.
\eeqs
Now pick $\psi \in \msF(\mcD,\mcO)$ and apply it to $\pi \in \mcD$. $\psi(\pi)\in \mcO$ is a generalized partition, but we can assign a measure $\de_{\psi(\pi)}$ analogously to partitions. Then $\psi(\de_\pi)=\de_{\psi(\pi)}$. But $\psi(\de_{i(\pi)})$ is not even a `generalized partition' since its support lies just somewhere in the support of $\de_{\psi(\pi)}$. Worse, we do not even know which point goes where since the push forward of a sum of point measures does not care about it as long as the supports are mapped bijectively to each other. To circumvent this problem, we will minimize over all possibilities in the following way. Let $c$ be a metric-like cost function. Set 
\beqs
d(\phi(n_i), \psi(n_i)):= \min C(\de_{\phi(n_i)}, \de_{\psi(n_i)})
\eeqs
where the minimum is taken over all (finite) possibilities to place $\psi(n_i)$ in $\psi(\pi)$. Then $d(\phi(n_i), \psi(n_i))=0$ if and only if the support of the measures $\de_{\phi(n_i)}$ and $ \de_{\psi(n_i)}$ coincides. Now define $d(\phi(\pi), \psi(\pi)):= \sum_{1 \leq i \leq \ell(\pi)} d(\phi(n_i), \psi(n_i))$. Then $d(\phi(\pi), \phi(\pi)))=0$ and $d(\phi(\pi), \psi(\pi)))\geq 0$ for $\psi \neq \phi$. Now set $\msC(\pi, \psi(\pi)):=d(\phi(\pi), \psi(\pi))$ and the claim follows.
\end{proof}


\end{document}